\newlength{\defbaselineskip}
\newcommand{\setlinespacing}[1]%
           {\setlength{\baselineskip}{#1 \defbaselineskip}}
\theoremstyle{plain}
\newtheorem{thm}{Theorem}[section]
\newtheorem{lem}[thm]{Lemma}
\theoremstyle{definition}
\newtheorem{defn}[thm]{Definition}
\numberwithin{equation}{section}
\begin{document}

\newcommand{\ol }{\overline}
\newcommand{\ul }{\underline }
\newcommand{\ra }{\rightarrow }
\newcommand{\lra }{\longrightarrow }
\newcommand{\ga }{\gamma }
\newcommand{\st }{\stackrel }
\newcommand{\scr }{\scriptsize }

\title{\Large\textbf{Some outer commutator multipliers and capability of nilpotent products of cyclic groups}}
\author{\textbf{ Mohsen Parvizi and Behrooz Mashayekhy}\\ Department of Pure Mathematics,\\ Center of Excellence in Analysis on Algebraic Structures,\\
          Ferdowsi University of Mashhad,
          Mashhad, Iran. \\
          Email: parvizi@um.ac.ir    and    bmashf@um.ac.ir}
\date{ }
\maketitle

\begin{abstract}
In this paper, first we obtain an explicit formula for an outer commutator multiplier of nilpotent products of cyclic groups  with respect to the variety $[\mathfrak{N}_{c_1},\mathfrak{N}_{c_2}]$, $\mathfrak{N}_{c}M(\mathbb{Z}\st{n}*
\mathbb{Z}\st{n}* \cdots \st{n}* \mathbb{Z}\st{n}*
\mathbb{Z}_{r_1}\st{n}* \mathbb{Z}_{r_2}\st{n}* \cdots \st{n}*
\mathbb{Z}_{r_t})$ where $r_{i+1}\mid r_i \ \ (1\leq i\leq t-1)$,  $c_1+c_2+1\geq n$, $2c_2-c_1>2n-2$ and $(p,r_1)=1$ for all prime less than or equal $c_1+c_2+n$, second we give a necessary condition for these groups to be $[\mathfrak{N}_{c_1},\mathfrak{N}_{c_2}]$-capable.
\end{abstract}
{\it Key words}: Outer commutator; nilpotent product; Baer invariant; capability.\\
2010{\it Mathematics Subject Classification}: 20E34, 20F12.

\section{Introduction and preliminaries}
\label{sec1}
Schur \cite{Schur} in 1907 and Wiegold \cite{Wiegold} in 1971 obtained a
structure of the Schur multiplier of the direct product of two
finite groups as follows:
\begin{center}
$M(A\times B)\cong M(A)\oplus M(B)\oplus \frac{[A,B]}{[A,B,A*B]}$,
where $\frac{[A,B]}{[A,B,A*B]}\cong A_{ab}\otimes B_{ab}$.
\end{center}
In 1979, Moghaddam \cite{Moghaddam} and in 1998, Ellis \cite{Ellis},
succeeded to extend the above result to obtain a structure of the
$c$-nilpotent multiplier of the direct product of two groups, ${\cal
N}_cM(A\times B)$. Also in 1997 M. R. R. Moghaddam and the second author in a joint paper \cite{Mashayekhy8}
presented an explicit formula for the $c$-nilpotent multiplier
of a finite abelian group.

Tahara \cite{Tahara} and Haebich \cite{Haebich1}
concentrated on the Schur multiplier of semidirect products of groups.
Also the second author worked on the Baer invariants of a semidirect
product in \cite{Mashayekhy9,Mashayekhy12}.

In 1972 Haebich \cite{Haebich2} presented a formula for the Schur multiplier
of a regular product of a family of groups. It is known that the
regular product is a generalization of the nilpotent product and the
last one is a generalization of the direct product, so Haebich's
result is a vast generalization of the Schur's result. Also,
Moghaddam \cite{Moghaddam}, in 1979 gave a formula similar to Haebich's
formula for the Schur multiplier of a nilpotent product. Moreover,
in 1992, Gupta and Moghaddam \cite{Gupta} presented an
explicit formula for the $c$-nilpotent multiplier of the $n$th
nilpotent product $\mathbb{Z}_2\st{n}*\mathbb{Z}_2$.

In 2001, the second author \cite{Mashayekhy10} found a structure similar to
Haebich's type for the $c$-nilpotent multiplier of a nilpotent
product of a family of cyclic groups. The $c$-nilpotent multiplier
of a free product of some cyclic groups was studied by the second
author \cite{Mashayekhy11} in 2002.

Recently, the authors \cite{Mashayekhy14,Parvizi17} concentrated on the Baer invariant
with respect to the variety of polynilpotent groups. We presented an explicit structure for some polynilpotent
multipliers of the $n$th nilpotent product of some infinite cyclic
groups \cite{Parvizi17} and also found explicit structures for all polynilpotent
multipliers of finitely generated abelian groups \cite{Mashayekhy14}.

In \cite{Parvizi18} the authors succeeded to determine an explicit
structure of
$[\mathfrak{N}_{c_1},\mathfrak{N}_{c_2}]M(\mathbb{Z}\st{n}*
\mathbb{Z}\st{n}* \cdots \st{n}* \mathbb{Z})$, where $c_1+c_2+1\geq n$ and $2c_2-c_1>2n-2$.
Also the second author, Hokmabadi and Mohammadzadeh \cite{MHM}, succeeded to compute an explicit structure of $\mathfrak{N}_{c}M(\mathbb{Z}\st{n}*
\mathbb{Z}\st{n}* \cdots \st{n}* \mathbb{Z}\st{n}*
\mathbb{Z}_{r_1}\st{n}* \mathbb{Z}_{r_2}\st{n}* \cdots \st{n}*
\mathbb{Z}_{r_t})$ and $\mathfrak{N}_{c_1,\ldots,c_t}M(\mathbb{Z}\st{n}*
\mathbb{Z}\st{n}* \cdots \st{n}* \mathbb{Z}\st{n}*
\mathbb{Z}_{r_1}\st{n}* \mathbb{Z}_{r_2}\st{n}* \cdots \st{n}*
\mathbb{Z}_{r_t})$ under some conditions. Hokmabadi \cite{Hokmabadi} presented the necessary and sufficient conditions for $\mathbb{Z}\st{n}*
\mathbb{Z}\st{n}* \cdots \st{n}* \mathbb{Z}\st{n}*
\mathbb{Z}_{r_1}\st{n}* \mathbb{Z}_{r_2}\st{n}* \cdots \st{n}*
\mathbb{Z}_{r_t}$ to be $\mathfrak{N_c}$-capable, where $r_{i+1}\mid r_i \ \ (1\leq i\leq t-1)$ and $(p,r_1)=1$ for all prime less that or equal $n$. In this article we intend to extend the results of \cite{Parvizi18} and \cite{MHM} to obtain an explicit structure of
$[\mathfrak{N}_{c_1},\mathfrak{N}_{c_2}]M(\mathbb{Z}\st{n}*
\mathbb{Z}\st{n}* \cdots \st{n}* \mathbb{Z}\st{n}*
\mathbb{Z}_{r_1}\st{n}* \mathbb{Z}_{r_2}\st{n}* \cdots \st{n}*
\mathbb{Z}_{r_t})$, where $r_{i+1}\mid r_i \ \ (1\leq i\leq t-1)$, $c_1+c_2+1\geq n$, $2c_2-c_1>2n-2$ and $(p,r_1)=1$ for all prime less that or equal $c_1+c_2+n$. Furthermore a necessary condition on $[\mathfrak{N_{c_1}},\mathfrak{N_{c_2}}]$-capability of $\mathbb{Z}\st{n}*
\mathbb{Z}\st{n}* \cdots \st{n}* \mathbb{Z}\st{n}*
\mathbb{Z}_{r_1}\st{n}* \mathbb{Z}_{r_2}\st{n}* \cdots \st{n}*
\mathbb{Z}_{r_t}$ is given.

\begin{defn}
Let $G$ be any group with a free presentation $G\cong F/R$, then, after R. Baer \cite{Baer}, the \textit{Baer
invariant} of $G$ with respect to a variety of groups $\mathfrak
{V}$, denoted by $\mathfrak{V}M(G)$, is defined to be
$$\mathfrak{V}M(G)=\frac{R\cap V(F)}{[RV^*F]}\ ,$$
where $V$ is the set of words of the variety $\mathfrak{V}$, $V(F)$
is the verbal subgroup of $F$ with respect to $\mathfrak{V}$ and
$$[RV^*F]=<v(f_1,\ldots ,f_{i-1},f_ir,f_{i+1},\ldots,f_n)v(f_1,\ldots,f_i,
\ldots,f_n)^{-1}\mid $$
$$r\in R, 1\leq i\leq n, v\in V ,f_i\in F, n\in \textbf{N}>.$$
\end{defn}

In special case of the variety $\mathfrak{A}$ of abelian groups, the
Baer invariant of $G$ will be the well-known notion the
\textit{Schur multiplier}
$$\frac{R\cap F'}{[R,F]}.$$

If $\mathfrak{V}$ is the variety of nilpotent groups of class at
most $c\geq1$, $\mathfrak{N}_c$, then the Baer invariant of $G$ with
respect to $\mathfrak{N}_c$ which is called the
\textit{$c$-nilpotent multiplier} of $G$, will be
$$\mathfrak{N}_cM(G)=\frac{R\cap \gamma_{c+1}(F)}{[R,\ _cF]},$$
where $\gamma_{c+1}(F)$ is the $(c+1)$-st term of the lower central
series of $F$ and $[R,\ _1F]=[R,F], [R,\ _cF]=[[R,\
_{c-1}F],F]$, inductively.

\begin{lem}
(Hulse and Lennox 1976). \textit{If $u$ and $w$ are any
two words and $v=[u,w]$ and $K$ is a normal subgroup of a group $G$,
then}
$$ [Kv^*G]=[[Ku^*G],w(G)][u(G),[Kw^*G]].$$
\end{lem}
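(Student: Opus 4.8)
The plan is to prove the claimed equality of subgroups by establishing the two inclusions separately, in each case reducing to a manipulation on the defining generators of $[Kv^*G]$ with the help of the standard commutator identities $[xy,z]=[x,z]^y[y,z]$ and $[x,yz]=[x,z][x,y]^z$ (with $[x,y]=x^{-1}y^{-1}xy$ and $x^y=y^{-1}xy$). Write $v=[u,w]$ with $u$ and $w$ words on disjoint sets of variables, so that a value of $v$ has the shape $v(g,h)=[u(g),w(h)]$ and every argument position of $v$ is either a \emph{$u$-position} or a \emph{$w$-position}. Before starting I would record the normality facts on which everything rests: if $K\triangleleft G$ then $[Ku^*G]$ and $[Kw^*G]$ are normal in $G$ (conjugating a defining generator $u(\ldots,g_ik,\ldots)u(\ldots,g_i,\ldots)^{-1}$ by $g$ turns it, since conjugation is an automorphism and $k^g\in K$, into another defining generator), while $u(G)$ and $w(G)$ are verbal, hence fully invariant. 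Consequently $N:=[[Ku^*G],w(G)][u(G),[Kw^*G]]$ and each of its two factors are normal in $G$, and the same conjugation argument shows $[Kv^*G]\triangleleft G$.

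For the inclusion $[Kv^*G]\subseteq N$ I would take a defining generator of $[Kv^*G]$ and split on the type of its argument position. If $k$ is inserted at a $u$-position, set $a=u(\ldots,g_i,\ldots)$, $a'=u(\ldots,g_ik,\ldots)$ and $b=w(h)$; then $\delta:=a'a^{-1}\in[Ku^*G]$ and the generator equals $[a',b][a,b]^{-1}=[\delta a,b][a,b]^{-1}=[\delta,b]^a$, which lies in the normal subgroup $[[Ku^*G],w(G)]$. If instead $k$ is inserted at a $w$-position, set $b=w(\ldots,h_j,\ldots)$, $b'=w(\ldots,h_jk,\ldots)$ and $a=u(g)$; then $\varepsilon:=b'b^{-1}\in[Kw^*G]$ and the generator equals $[a,b'][a,b]^{-1}=[a,b][a,\varepsilon]^b[a,b]^{-1}$, a conjugate of $[a,\varepsilon]\in[u(G),[Kw^*G]]$. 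In both cases the generator lands in $N$, so $[Kv^*G]\subseteq N$.

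For the reverse inclusion $N\subseteq[Kv^*G]$ I would read the same two computations backwards. From the $u$-position identity, $[\delta,b]^a\in[Kv^*G]$ for every defining generator $\delta$ of $[Ku^*G]$ and every generator $b=w(h)$ of $w(G)$; since $[Kv^*G]$ is normal this yields $[\delta,b]\in[Kv^*G]$. As $[Kv^*G]$ is a normal subgroup containing all these commutators $[\delta,b]$, it contains the subgroup generated by them and their $G$-conjugates, namely $[[Ku^*G],w(G)]$. Symmetrically, the $w$-position identity shows $[a,\varepsilon]^b\in[Kv^*G]$, whence $[a,\varepsilon]\in[Kv^*G]$ by normality, and therefore $[u(G),[Kw^*G]]\subseteq[Kv^*G]$. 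Taking the product of the two inclusions gives $N\subseteq[Kv^*G]$, completing the equality.

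The only genuinely delicate point is the systematic use of normality: the defining generators of $[Kv^*G]$ emerge not as the bare commutators $[\delta,b]$ and $[a,\varepsilon]$ but as their conjugates, so one must know in advance that $[Kv^*G]$, $[[Ku^*G],w(G)]$ and $[u(G),[Kw^*G]]$ are all normal in $G$ in order both to absorb the conjugating factors and to pass from commutators of generators to the full commutator subgroups. Once these normality statements are in hand the argument is a short and essentially formal application of the two commutator identities, and I would expect the write-up to spend most of its effort verifying normality of the marginal-type subgroups $[Ku^*G]$ and $[Kw^*G]$.
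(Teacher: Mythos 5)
The paper does not actually prove this lemma---it is quoted from Hulse and Lennox and the ``proof'' is just the citation to their Lemma 2.9---so there is no in-paper argument to compare yours against. Your self-contained proof is correct and is, in essence, the standard argument one would expect to find in the source: both commutator identities are applied correctly, the computation $[a',b][a,b]^{-1}=[\delta,b]^{a}$ for a $u$-position insertion and $[a',b'][a,b]^{-1}$ as a conjugate of $[a,\varepsilon]$ for a $w$-position insertion is right, and the normality bookkeeping (closure of the defining generating set of $[Ku^*G]$ under conjugation because $K\triangleleft G$, full invariance of $u(G)$ and $w(G)$, hence normality of both factors of $N$ and of $[Kv^*G]$ itself) is exactly what makes the two inclusions close up. Two small points are worth making explicit in a final write-up. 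First, you correctly impose that $u$ and $w$ are words in disjoint variables; this is the standard convention for forming the outer commutator word $[u,w]$ and is genuinely needed so that every argument position of $v$ is unambiguously a $u$-position or a $w$-position. Second, the reverse inclusion silently uses the standard fact that if $A=\langle S\rangle$ and $B=\langle T\rangle$ are normal in $G$ then $[A,B]$ equals the normal closure of $\{[s,t]\mid s\in S,\ t\in T\}$; you should either cite this or give the one-line proof (modulo that normal closure the generators of $\bar A$ and $\bar B$ commute, hence $[\bar A,\bar B]=1$). With those two remarks recorded, the argument is complete.
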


\begin{proof}
See \cite[Lemma 2.9]{Hulse}.
\end{proof}

Now, using the above lemma, let $\mathfrak{V}$ be the outer
commutator variety $[\mathfrak{N}_{c_1},\mathfrak{N}_{c_2}]$, then
the Baer invariant of a group $G$ with respect to $\mathfrak{V}$, is
as follows:
$$[\mathfrak{N}_{c_1},\mathfrak{N}_{c_2}]M(G)\cong \frac{R\cap
[\ga_{c_1+1}(F),\ga_{c_2+1}(F)]}{[R,\ _{c_1}F,\ga_{c_2+1}(F)][R,\
_{c_2}F,\ga_{c_1+1}(F)]}
 \ \ (\star).$$

\begin{defn}
\textit{Basic commutators} are defined in the
usual way. If $X$ is a fully ordered independent subset of a free
group, the basic
commutators on $X$ are defined inductively over their weight as follows:\\
$(i)$ All the members of $X$ are basic commutators of weight one on $X$;\\
$(ii)$ assuming that $n>1$ and that the basic commutators of
weight less than $n$ on $X$ have been defined and ordered;\\
$(iii)$ a commutator $[b,a]$ is a basic commutator of weight $n$ on
$X$ if  $wt(a)+wt(b)=n,\ a<b$, and if $b=[b_1,b_2]$, then $b_2\leq
a$. The ordering of basic commutators is then extended to include
those of weight $n$ in any way such that those of weight less than
$n$ precede those of weight $n$. The natural way to define the order
on basic commutators of the same weight is lexicographically,
$[b_1,a_1]<[b_2,a_2]$ if $b_1<b_2$ or if $b_1=b_2$ and $a_1<a_2$.
\end{defn}

The next two theorems are vital in our investigation.

\begin{thm}
(Hall \cite{Mhall}). Let $F=<x_1,x_2,\ldots ,x_d>$ be a free group, then
$$ \frac {\ga_n(F)}{\ga_{n+i}(F)} \ \ , \ \ \ \  1\leq i\leq n$$
is the free abelian group freely generated by the basic commutators
of weights $n,n+1,\ldots ,n+i-1$ on the letters $\{x_1,\ldots
,x_d\}.$
\end{thm}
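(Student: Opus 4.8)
The plan is to establish three facts: that the quotient $\gamma_n(F)/\gamma_{n+i}(F)$ is abelian, that the basic commutators of weights $n,n+1,\ldots,n+i-1$ on $\{x_1,\ldots,x_d\}$ generate it, and that they do so \emph{freely}. Abelianness is exactly where the hypothesis $1\leq i\leq n$ enters: from the standard inclusion $[\gamma_m(F),\gamma_n(F)]\subseteq\gamma_{m+n}(F)$ we get $[\gamma_n(F),\gamma_n(F)]\subseteq\gamma_{2n}(F)$, and since $2n\geq n+i$ this lies inside $\gamma_{n+i}(F)$. Hence the quotient is an abelian group and it suffices to exhibit a free basis for it.

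For generation I would run Philip Hall's \emph{collection process}. Using the elementary commutator identities, most importantly $[xy,z]\equiv[x,z][y,z]$ and $[x,yz]\equiv[x,y][x,z]$ modulo commutators of strictly higher weight, together with the Jacobi/Hall--Witt relation, one shows by induction on weight that every commutator of weight $w$ in the generators can be rewritten, modulo $\gamma_{w+1}(F)$, as a product of basic commutators of weight $w$. Since every element of $\gamma_n(F)$ is a product of commutators of weight at least $n$, one collects the occurrences of each basic commutator and pushes all uncollected error terms into strictly higher weight; iterating, these errors are eventually absorbed into $\gamma_{n+i}(F)$. This expresses every element of $\gamma_n(F)$, modulo $\gamma_{n+i}(F)$, as a product of integer powers of the basic commutators of weights $n$ through $n+i-1$, and thus yields a surjection from the free abelian group $A$ on these basic commutators onto $\gamma_n(F)/\gamma_{n+i}(F)$.

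The crux is to show this surjection is injective, equivalently that the basic commutators satisfy no nontrivial relation modulo higher weight. Here I would invoke the Magnus embedding $\theta\colon F\hookrightarrow U$, where $U$ is the group of units of the power series ring $\mathbb{Z}\langle\langle X_1,\ldots,X_d\rangle\rangle$ in noncommuting indeterminates, defined by $x_j\mapsto 1+X_j$. Under $\theta$ the lower central series is carried into the degree filtration, so $\theta$ induces a graded homomorphism from $\bigoplus_w \gamma_w(F)/\gamma_{w+1}(F)$ into the associated graded ring, and its image is precisely the free Lie ring $L$ on $X_1,\ldots,X_d$ over $\mathbb{Z}$. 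By the Poincar\'e--Birkhoff--Witt theorem each homogeneous component $L_w$ is free abelian, and Witt's formula gives $\operatorname{rank} L_w=\frac{1}{w}\sum_{e\mid w}\mu(e)\,d^{\,w/e}$, which is exactly the number of basic commutators of weight $w$. One checks that the basic commutators of weight $w$ map onto a generating set of $L_w$ of this same cardinality, hence to a basis; this forces their images, and therefore they themselves modulo higher weight, to be $\mathbb{Z}$-independent. Combined with the surjection of the previous step, this identifies $A$ with $\gamma_n(F)/\gamma_{n+i}(F)$.

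I expect this third step to be the main obstacle. Abelianness is immediate and generation is essentially bookkeeping once the collection process and its commutator identities are in place; but freeness genuinely requires an external model — the noncommutative power series ring — in which the basic commutators become visibly linearly independent, together with the rank count supplied by Witt's formula via Poincar\'e--Birkhoff--Witt. In other words, the delicate part is certifying that collection introduces no hidden relations among commutators of equal weight, and it is precisely the Magnus embedding that makes this transparent.
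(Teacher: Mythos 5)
Your outline is correct, and it is essentially the classical proof from the reference the paper cites (M.~Hall, \emph{The Theory of Groups}): the paper itself offers no proof of this statement, quoting it as Hall's basis theorem, and your three steps --- abelianness from $[\gamma_n(F),\gamma_n(F)]\subseteq\gamma_{2n}(F)\subseteq\gamma_{n+i}(F)$, generation by the collection process, and linear independence via the Magnus embedding $x_j\mapsto 1+X_j$ together with the Witt/PBW rank count --- are exactly how that source establishes it. The only point worth tightening is the final passage from ``the weight-$w$ basic commutators are independent modulo $\gamma_{w+1}(F)$ for each $w$'' to independence in $\gamma_n(F)/\gamma_{n+i}(F)$ itself, which needs the routine filtration argument (examine the lowest weight occurring with a nonzero exponent and reduce modulo the next term of the lower central series).
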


\begin{thm}
(Witt Formula \cite{Mhall}).The number of basic commutators of weight $n$ on
$d$ generators is given by the following formula:
$$ \chi_n(d)=\frac {1}{n} \sum_{m|n}^{} \mu (m)d^{n/m},$$
where $\mu (m)$ is the M\"{o}bius function.
\end{thm}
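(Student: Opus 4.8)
The plan is to prove the formula by identifying $\chi_n(d)$ with the rank of a homogeneous component of the free Lie ring and then extracting the count by a generating-function (Poincaré--Birkhoff--Witt) argument together with Möbius inversion. By Theorem 1.4, the basic commutators of weight $n$ on $\{x_1,\dots,x_d\}$ form a free basis of the free abelian group $\ga_n(F)/\ga_{n+1}(F)$; hence $\chi_n(d)$ is exactly the rank of this quotient, which coincides with the dimension over $\mathbb{Q}$ of the degree-$n$ homogeneous component $L_n$ of the free Lie algebra $L$ on $d$ generators. Thus it suffices to compute $\dim_{\mathbb{Q}} L_n$.

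First I would invoke the Poincaré--Birkhoff--Witt theorem: the universal enveloping algebra of $L$ is the free associative algebra $A$ on $x_1,\dots,x_d$, whose degree-$n$ component has dimension $d^n$, these being the monomials of length $n$. Choosing a homogeneous basis of $L$ with $\chi_m(d)$ elements in each degree $m$, PBW shows that the ordered products of basis elements form a basis of $A$. Comparing Hilbert series then gives the identity
$$\prod_{m\ge 1}(1-t^m)^{-\chi_m(d)}=\frac{1}{1-dt}.$$
Next I would take logarithms of both sides and expand, obtaining
$$\sum_{m\ge 1}\chi_m(d)\sum_{k\ge 1}\frac{t^{mk}}{k}=\sum_{j\ge 1}\frac{(dt)^{j}}{j},$$
and comparing the coefficients of $t^{n}$ would yield $\sum_{m\mid n} m\,\chi_m(d)=d^{\,n}$. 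Finally, applying Möbius inversion to this divisor sum gives $n\,\chi_n(d)=\sum_{m\mid n}\mu(m)\,d^{\,n/m}$, which is the asserted formula.

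The only genuinely delicate point is the transition from a purely group-theoretic count of basic commutators to the Lie-ring dimension, together with the invocation of PBW; once that bridge is in place, the remainder is the standard logarithm/Möbius manipulation. An alternative, entirely elementary route avoiding PBW would replace the generating-function step by a direct bijection between basic commutators of weight $n$ and Lyndon words of length $n$ over a $d$-letter alphabet, counting the latter via aperiodic necklaces: every word of length $n$ is a unique power of a primitive word, giving $\sum_{e\mid n} e\,\psi(e)=d^{\,n}$, where $\psi(e)$ is the number of Lyndon words of length $e$, and the same Möbius inversion then identifies $\psi(n)$ with $\chi_n(d)$. The main obstacle in this second approach is establishing the basic-commutator/Lyndon-word correspondence rigorously, which is where most of the combinatorial work would lie.
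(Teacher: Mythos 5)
Your argument is mathematically sound, but note that the paper itself offers no proof of this theorem: it is quoted verbatim from M.~Hall's \emph{The Theory of Groups} with a citation, so there is no internal argument to compare yours against. Measured against the proof in the cited source, your route is genuinely different in its first half and identical in its second. Hall derives the same Hilbert-series identity $\prod_{m\geq 1}(1-t^m)^{-\chi_m(d)}=(1-dt)^{-1}$, but he gets it by a purely group-theoretic count: the collection process shows that every positive word of length $n$ in the free generators is uniquely an ordered product of positive powers of basic commutators, and comparing the number of such words ($d^n$) with the number of collected expressions yields the product formula directly, with no Lie algebras in sight. From that point on, the logarithm expansion, the divisor identity $\sum_{m\mid n}m\,\chi_m(d)=d^n$, and the M\"{o}bius inversion are exactly as you wrote them, and your computations there are correct. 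What your PBW route buys is conceptual clarity and a connection to the free Lie algebra; what it costs is precisely the bridge you flag as delicate. To identify $\chi_n(d)$ with $\dim_{\mathbb{Q}}L_n$ you need more than Theorem 1.4 of the paper (which only says the weight-$n$ basic commutators freely generate $\gamma_n(F)/\gamma_{n+1}(F)$): you need that the graded ring $\bigoplus_n\gamma_n(F)/\gamma_{n+1}(F)$ is the \emph{free} Lie ring on $d$ generators (the Magnus--Witt theorem), or equivalently a Hall-basis theorem for free Lie algebras. That result is classical but is of comparable depth to the statement being proved, so in a self-contained write-up you would either have to prove it (e.g.\ via the Magnus embedding into the power-series algebra) or adopt Hall's collection-process count, or your alternative Lyndon-word bijection, which avoids the issue at the price of the combinatorial correspondence you mention. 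As a proof sketch your proposal is acceptable; as a complete proof it still owes that one bridge.
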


\begin{defn}
Let $\mathfrak{V}$ be a variety of groups defined by a set of laws
$V$. Then the \textit{verbal product} of a family of groups
$\{G_i\}_{i\in I}$ associated with the variety $\mathfrak{V}$ is
defined to be
$$\mathfrak{V}\prod_{i\in I}G_i=\frac {\prod^*G_i}{V(G)\cap
[G_i]^*},$$ where $G=\prod^{*}_{i\in I}G_i$ is the free product of
the family $\{G_i\}_{i\in I}$ and $[G_i]^*=\\ <[G_i,G_j]|i,j\in
I,i\neq j>^G$ is the cartesian subgroup of the free product $G$.
\end{defn}

The verbal product is also known as \textit{varietal product} or
simply \textit{$\mathfrak{V}$-product}. If $\mathfrak{V}$ is the
variety of all groups, then the corresponding verbal product is the
free product; if $\mathfrak{V}=\mathfrak{A}$ is the variety of all
abelian groups, then the verbal product is the direct product. Also,
if $\mathfrak{V}=\mathfrak{N}_c$ is the variety of nilpotent groups
of class at most $c\geq 1$, then the verbal product is called the
$c$\textit{th nilpotent product} of the $G_i$'s.

%%%%%%%%%%%%%%%%%%%%%%%%%%%%%%%%%%%%%%%%%%%%%%%%%%%%%%%%%%%%%%%%%%%%%%%%%%%%%%%%%%%%%%%%%%%%%%%%%%%%%%%%%%%
%%%%%%%%%%%%%%%%%%%%%%%%%%%%%%%%%%%%%%%%%%%%%%%%%%%%%%%%%%%%%%%%%%%%%%%%%%%%%%%%%%%%%%%%%%%%%%%%%%%%%%%%%%%
%%%%%%%%%%%%%%%%%%%%%%%%%%%%%%%%%%%%%%%%%%%%%%%%%%%%%%%%%%%%%%%%%%%%%%%%%%%%%%%%%%%%%%%%%%%%%%%%%%%%%%%%%%%

\section{Main Results}
\label{sec2}

Let $ G_{i}= \langle x_{i}| x_{i}^{k_{i}} \rangle \cong \mathbb{Z}
_{k _{i}} $  be the cyclic group of order $ k _{i} $ if $ k _{i} \geq
1 $, $(m+1\leq i \leq m+t)$ and the infinite cyclic group if $ k _{i} = 0 $, $(1\leq i \leq m)$. Let $$ 1
\rightarrow R _{i}= \langle x_{i}^{k_{i}} \rangle \rightarrow F_{i}=
\langle x_{i} \rangle \rightarrow G_{i} \rightarrow 1 $$ be a free
presentation for $ G_{i} $, so the $ n $th nilpotent product of the
family $ \{ G_{i}\} _{i \in I } $ is defined as follows: $$ \prod
^{\stackrel{n}{*}} _{i \in I }G_{i} = \frac{ \prod ^{* } _{i \in I }
G_{i}}{ \gamma _{n+1}(\prod  ^{* } _{i \in I }G_{i}) \cap [G_{i}]
^{*} _{i \in I } }.$$ It is easy to show that the cartesian
subgroup is the kernel of the natural homomorphism from $ \prod  ^{* } _{i
\in I }G_{i} $ to the direct product $ \prod ^{\times } _{i \in I
}G_{i} $. Since $ G_{i}^{,}$s are cyclic, it is easy to see that $
\gamma _{n+1}(\prod ^{* } _{i \in I } G_{i}) \subseteq [G_{i}] ^{*}
$ and hence $ \prod ^{\stackrel{n}{*}} _{i \in I }G_{i} = \prod ^{*
} _{i \in I } G_{i} / \gamma _{n+1}(\prod ^{* } _{i \in I }G_{i}) .$
Also, it is routine to check that a free presentation for the $n$th
nilpotent product of $ \prod ^{\stackrel{n}{*}} _{i \in I }G_{i}$ is
as follows: $$ 1 \rightarrow R=S \gamma _{n+1}(F)\rightarrow F=
\prod ^{* } _{i \in I } F_{i} \rightarrow \prod  ^{\stackrel{n}{*}}
_{i \in I }G_{i} \rightarrow 1,$$ where $S=\langle x_{i}^{k_{i}}|i
\in I \rangle ^{F}$ and $F$ is the free group on the set $$X=\{x_1,\cdots,x_m,x_{m+1},\cdots,x_{m+t}\}.$$

Now let $G\cong \mathbb{Z}\st{n}* \mathbb{Z}\st{n}* \cdots \st{n}*
\mathbb{Z}\st{n}* \mathbb{Z}_{r_1}\st{n}* \mathbb{Z}_{r_2}\st{n}*
\cdots \st{n}* \mathbb{Z}_{r_t}$ be the $n$th nilpotent product of
some cyclic groups ( $m$ copies of $\mathbb{Z}$). We intend
to obtain the structure of some outer commutator multipliers of $G$
of the form
$$[\mathfrak{N}_{c_1},\mathfrak{N}_{c_2}]M(G).$$
Using $(\star)$ we have
$$[\mathfrak{N}_{c_1},\mathfrak{N}_{c_2}]M(G)\cong
\frac{\ga_{n+1}(F)\cap
 [\ga_{c_1+1}(F),\ga_{c_2+1}(F)]}{[R,\
_{c_1}F,\ga_{c_2+1}(F)][R,\ _{c_2}F,\ga_{c_1+1}(F)]}.$$ Now if $c_1+c_2+1\geq n$, then we have

$$[\mathfrak{N}_{c_1},\mathfrak{N}_{c_2}]M(G)\cong \frac{[\ga_{c_1+1}(F),\ga_{c_2+1}(F)]}{[R,\
_{c_1}F,\ga_{c_2+1}(F)][R,\ _{c_2}F,\ga_{c_1+1}(F)]}.$$
Now consider the isomorphism
$$[\mathfrak{N}_{c_1},\mathfrak{N}_{c_2}]M(G)\cong \frac{[\mathfrak{N}_{c_1},\mathfrak{N}_{c_2}]M(\mathbb{Z}\st{n}*
\mathbb{Z}\st{n}* \cdots \st{n}* \mathbb{Z})}{\frac{[R,\
_{c_1}F,\ga_{c_2+1}(F)][R,\
_{c_2}F,\ga_{c_1+1}(F)]}{[\gamma_{c_1+n+1}(F),\gamma_{c_1+1}(F)][\gamma_{c_2+n+1}(F),\gamma_{c_2+1}(F)]}}.$$
To determine the explicit structure of
$[\mathfrak{N}_{c_1},\mathfrak{N}_{c_2}]M(G)$ it is enough to
determine the structure of
$[\mathfrak{N}_{c_1},\mathfrak{N}_{c_2}]M(\mathbb{Z}\st{n}*
\mathbb{Z}\st{n}* \cdots \st{n}* \mathbb{Z})$ and $$\frac{[R,\
_{c_1}F,\ga_{c_2+1}(F)][R,\
_{c_2}F,\ga_{c_1+1}(F)]}{[\gamma_{c_1+n+1}(F),\gamma_{c_1+1}(F)][\gamma_{c_2+n+1}(F),\gamma_{c_2+1}(F)]}.$$
But as we state in the following the authors \cite[Theorem 2.8]{Parvizi18} gave
the explicit structure of the first group.

\begin{thm}
If $2c_2-c_1> 2n-2$ and $c_1\geq c_2$, then
$\mathfrak{V}M(\mathbb{Z}\st{n}* \mathbb{Z}\st{n}*\ldots
\st{n}*\mathbb{Z})$ is the free abelian group with the following
basis:
$$D=\{ \ a[\ga_{c_1+n+1}(F),\ga_{c_2+1}(F)][\ga_{c_2+n+1}(F),\ga_{c_1+1}(F)]\ \ | \ \
a\in A-C \ \},$$ where
\begin{center}
$A=\{[\beta,\alpha] \ | \ \beta $ and $ \alpha $ are basic
commutators on $X$ such that $ \beta>\alpha, $ $ \ c_1+1\leq
wt(\beta)\leq c_1+n, \   c_2+1\leq wt(\alpha)\leq c_2+n \};$\\ \ \  \\

$C=\{ \ [\beta,\alpha] \ | \ \beta$ and $\alpha$ are basic
commutators on $X$ such that $\beta>\alpha$, $c_2+n+1\leq wt(\beta)$
, $c_1+1\leq wt(\alpha)$ , $wt(\beta)+wt(\alpha)\leq 2n+c_1+c_2+1
\}$.
\end{center}
\end{thm}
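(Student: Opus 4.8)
The plan is to read the asserted group as the bracket quotient already isolated in the lines preceding the statement, and then to analyse that quotient weight by weight in the associated graded (free) Lie ring of $F$. First I would record the reduction: with $F$ free on $X=\{x_1,\dots,x_m\}$ and $R=\gamma_{n+1}(F)$, the hypothesis $c_1+c_2+1\geq n$ gives $[\gamma_{c_1+1}(F),\gamma_{c_2+1}(F)]\subseteq\gamma_{c_1+c_2+2}(F)\subseteq\gamma_{n+1}(F)=R$, so the numerator in $(\star)$ is the whole commutator subgroup; using $[R,\ _{c_i}F]=\gamma_{c_i+n+1}(F)$ the statement reduces to proving that
$$N/D,\qquad N=[\gamma_{c_1+1}(F),\gamma_{c_2+1}(F)],\quad D=[\gamma_{c_1+n+1}(F),\gamma_{c_2+1}(F)][\gamma_{c_2+n+1}(F),\gamma_{c_1+1}(F)],$$
is free abelian with the displayed basis.

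Next I would settle generation and commutativity. Expanding any $[u,v]$ with $u\in\gamma_{c_1+1}(F),\ v\in\gamma_{c_2+1}(F)$ through the Hall bases of $\gamma_{c_1+1}(F)/\gamma_{c_1+n+1}(F)$ and $\gamma_{c_2+1}(F)/\gamma_{c_2+n+1}(F)$ (Hall's theorem), the commutator is bilinear modulo deeper commutators; the tails and the bilinearity corrections fall into $D$ because the hypotheses force $c_1\geq c_2\geq 2n-1$, so for instance $[\gamma_{c_1+c_2+2}(F),\gamma_{c_1+c_2+2}(F)]\subseteq[\gamma_{c_1+n+1}(F),\gamma_{c_2+1}(F)]\subseteq D$. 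The same inclusion gives $[N,N]\subseteq D$, so $N/D$ is abelian, and antisymmetry together with $c_1\geq c_2$ lets me reorder every generator to a pair $[\beta,\alpha]$ with $\beta>\alpha$, keeping $\beta$ in the $c_1$-band and $\alpha$ in the $c_2$-band, i.e.\ to a member of $A$. Finally each $[\beta,\alpha]\in C$ has $wt(\beta)\geq c_2+n+1$ and $wt(\alpha)\geq c_1+1$, hence lies in $[\gamma_{c_2+n+1}(F),\gamma_{c_1+1}(F)]\subseteq D$; thus $N/D$ is generated by the cosets of $A-C$.

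The crux is independence, and the key observation is a bidegree bound. Any basic commutator occurring (in $N$ or in $D$) at total weight $w$ with $w\leq c_1+c_2+2n$ and with smaller half of weight $j\geq c_2+1$ satisfies $w<3(c_2+1)$, whence $i<2j$ where $i=w-j$ is the weight of the larger half; this is exactly where $2c_2-c_1>2n-2$ is consumed, since it caps the top weight at $c_1+c_2+2n<3(c_2+1)$. From $i<2j$ the right-hand factor of $\beta$ has weight $\leq i/2<j=wt(\alpha)$, so by Definition 1.4 every bracket-pair $[\beta,\alpha]$ ($\beta>\alpha$, both basic) appearing is itself a Hall basic commutator, with an intrinsic bidegree equal to the weights of its two halves. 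Passing to the associated graded with respect to the lower central filtration, in $L_w=\gamma_w(F)/\gamma_{w+1}(F)$ (free abelian on weight-$w$ basic commutators) the numerator contributes $\sum[L_i,L_j]$ over numerator bidegrees and $D$ contributes $\sum[L_{i'},L_{j'}]$ over denominator bidegrees; since each $[L_i,L_j]$ is now the span of the basic commutators of that one bidegree and distinct basic commutators have distinct bidegrees, numerator and denominator are spans of \emph{disjoint} basis subsets of $L_w$. Hence $\mathrm{gr}_w(N/D)$ is free abelian on the basic commutators of surviving bidegree, which are precisely the weight-$w$ elements of $A-C$. Projecting any relation $\prod a^{e_a}\in D$ to its lowest occurring weight yields a relation among distinct basic commutators there, forcing those $e_a=0$, and induction on weight gives full independence; so $N/D$ is free abelian on $A-C$, with rank computable by the Witt formula.

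The main obstacle is precisely this independence step: a priori the bracket-pairs $[\beta,\alpha]$ are entangled by Jacobi and collection relations, and $[L_i,L_j]$ need not be free on basic pairs. The inequality $i<2j$ across the whole relevant weight range is what dissolves these relations, because it turns every bracket-pair into a genuine basic commutator whose bidegree is intrinsic, so that numerator and denominator occupy disjoint coordinates of $L_w$. Verifying this bound uniformly for all numerator bidegrees at each weight $w\leq c_1+c_2+2n$ (including the large-$i$ denominator bidegrees) is the delicate arithmetic on which the whole argument turns.
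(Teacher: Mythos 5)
The paper offers no proof of this theorem at all --- it is quoted from \cite[Theorem 2.8]{Parvizi18} --- but your reconstruction follows the same route the cited proof takes: expand everything in Hall bases and observe that the hypotheses cap the total weight of every relevant bracket at $c_1+c_2+2n+1<3(c_2+1)$, which forces every bracket $[\beta,\alpha]$ of two basic commutators in the numerator or denominator ranges to be itself a basic commutator, so that numerator and denominator are spanned by disjoint subsets of the Hall basis of each $\gamma_w(F)/\gamma_{w+1}(F)$ and the quotient is free abelian on the leftover set $A-C$. Your sketch is sound and identifies the correct key mechanism; the one assertion you leave unverified --- that the image of the denominator in each graded piece is contained in (not merely contains) the span of the denominator-bidegree basic commutators --- is precisely the collection-process computation that occupies most of the cited proof, and it goes through by the same expansion-with-higher-weight-corrections argument you already invoke for the generation step.
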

Now to determine the explicit structure of the second group we need
the following lemma and some definitions and theorems from [13]
which are as follows.

\begin{lem}
With the above notations and assumptions we have
$$[R,\ _{c_1}F,\ga_{c_2+1}(F)]\cong [S,\ _{c_1}F,\ga_{c_2+1}(F)] \pmod{[\gamma_{c_1+n+1}(F),\gamma_{c_1+1}(F)]}$$
and
$$[R,\ _{c_2}F,\ga_{c_1+1}(F)]\cong [S,\ _{c_2}F,\ga_{c_1+1}(F)] \pmod {[\gamma_{c_2+n+1}(F),\gamma_{c_2+1}(F)]}$$
\end{lem}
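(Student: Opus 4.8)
The plan is to exploit the factorization $R=S\gamma_{n+1}(F)$ together with the elementary commutator identity $[AB,C]=[A,C][B,C]$, valid whenever $A,B$ are normal in $F$, so as to split the $R$-commutator into an $S$-part and a purely lower-central part. First I would record
$$[R,{}_{c_1}F]=[S,{}_{c_1}F]\,\gamma_{c_1+n+1}(F),$$
obtained by a short induction on $c_1$: since $[\gamma_{n+1}(F),F]=\gamma_{n+2}(F)$ one has $[R,F]=[S\gamma_{n+1}(F),F]=[S,F]\gamma_{n+2}(F)$, and applying $[AB,C]=[A,C][B,C]$ at each further step (all groups in sight being normal in $F$) pushes the lower-central factor up to $\gamma_{c_1+n+1}(F)$. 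The same computation gives $[R,{}_{c_2}F]=[S,{}_{c_2}F]\gamma_{c_2+n+1}(F)$.

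Next I would bracket these with the remaining term. Using $[AB,C]=[A,C][B,C]$ once more, with $A=[S,{}_{c_1}F]$, $B=\gamma_{c_1+n+1}(F)$ and $C=\gamma_{c_2+1}(F)$, yields
$$[R,{}_{c_1}F,\gamma_{c_2+1}(F)]=[S,{}_{c_1}F,\gamma_{c_2+1}(F)]\,[\gamma_{c_1+n+1}(F),\gamma_{c_2+1}(F)],$$
and symmetrically
$$[R,{}_{c_2}F,\gamma_{c_1+1}(F)]=[S,{}_{c_2}F,\gamma_{c_1+1}(F)]\,[\gamma_{c_2+n+1}(F),\gamma_{c_1+1}(F)].$$
In each case the sole difference between the $R$-term and the $S$-term is one lower-central commutator subgroup, so the whole lemma reduces to showing that this residual subgroup is swallowed by the indicated modulus. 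For the second congruence this is immediate: since $c_1\geq c_2$ we have $\gamma_{c_1+1}(F)\subseteq\gamma_{c_2+1}(F)$, whence the residual factor $[\gamma_{c_2+n+1}(F),\gamma_{c_1+1}(F)]$ already lies in the modulus $[\gamma_{c_2+n+1}(F),\gamma_{c_2+1}(F)]$, and the congruence follows at once.

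The first congruence is the real content, and I expect it to be the main obstacle. Its residual factor is $[\gamma_{c_1+n+1}(F),\gamma_{c_2+1}(F)]$, which — again because $c_1\geq c_2$ — \emph{contains} the stated modulus $[\gamma_{c_1+n+1}(F),\gamma_{c_1+1}(F)]$ rather than being contained in it; it lies trivially only in the larger group $[\gamma_{c_1+n+1}(F),\gamma_{c_2+1}(F)]$, so no direct absorption is available. To reach the smaller modulus one must show that every $[u,v]$ with $u\in\gamma_{c_1+n+1}(F)$ and $v$ a basic commutator of weight between $c_2+1$ and $c_1$ can be rewritten, modulo $[\gamma_{c_1+n+1}(F),\gamma_{c_1+1}(F)]$, inside $[S,{}_{c_1}F,\gamma_{c_2+1}(F)]$. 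My plan here is to pass to the free abelian quotients $\gamma_k(F)/\gamma_{k+1}(F)$ of Hall's basis theorem, expand $u$ and $v$ in basic commutators, and use that $[S,{}_{c_1}F]$ contains, modulo higher weight, the $r_j$-multiples of the basic commutators of weight $\geq c_1+1$ that involve the finite-order generators $x_{m+j}$; the hypothesis $(p,r_1)=1$ for every prime $p\leq c_1+c_2+n$ is precisely what lets these $r_j$-multiples recover the corresponding basic commutators after reduction. The genuinely delicate point — and the crux of the obstacle — is the fate of the mid-weight basic commutators that involve \emph{no} finite-order generator, since these cannot be produced from $S$ at all; one must check that the weight constraints $c_1+c_2+1\geq n$ and $2c_2-c_1>2n-2$ force exactly such residual terms into $[\gamma_{c_1+n+1}(F),\gamma_{c_1+1}(F)]$, and carrying out this basic-commutator bookkeeping is where the argument will stand or fall.
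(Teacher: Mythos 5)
Your decomposition $[R,{}_{c}F]=[S,{}_{c}F]\gamma_{c+n+1}(F)$ and the subsequent bracketing reproduce exactly the paper's argument: the paper phrases it as ``if $A\equiv B\pmod{C}$ then $[A,H]\equiv[B,H]\pmod{[C,H]}$'', starting from $[R,F]\equiv[S,F]\pmod{\gamma_{n+2}(F)}$, which is your identity in congruence form. Your treatment of the second congruence is complete and correct. The discrepancy you flag in the first congruence is real, but the rescue you propose is the wrong move and would fail: a commutator $[u,v]$ with $u$ a basic commutator of weight $c_1+n+1$ and $v$ one of weight $c_2+1\leq c_1$, both on the infinite-order generators $x_1,\ldots,x_m$ alone, can neither be produced from $[S,{}_{c_1}F,\gamma_{c_2+1}(F)]$ (every generator of $S$ is a power of a finite-order generator, so every term arising from $S$ carries some $x_{m+j}$ with an $r_j$-multiple) nor absorbed into $[\gamma_{c_1+n+1}(F),\gamma_{c_1+1}(F)]$ (as $v$ has weight $\leq c_1$, such elements are independent of that subgroup in the relevant lower central factor by Hall's basis theorem). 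The hypotheses $c_1+c_2+1\geq n$ and $2c_2-c_1>2n-2$ do not remove these terms, so no basic-commutator bookkeeping will push the residual $[\gamma_{c_1+n+1}(F),\gamma_{c_2+1}(F)]$ into the stated modulus when $c_2<c_1$ and $m\geq 2$.

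The resolution is that the modulus in the first congruence should be read as $[\gamma_{c_1+n+1}(F),\gamma_{c_2+1}(F)]$. That is precisely what the paper's own iterated-congruence proof delivers (it never addresses the smaller modulus either), and it is all that is needed afterwards: the subsequent theorem and display (1) work modulo the product $[\gamma_{c_1+n+1}(F),\gamma_{c_1+1}(F)][\gamma_{c_2+n+1}(F),\gamma_{c_2+1}(F)]$, which contains $[\gamma_{c_1+n+1}(F),\gamma_{c_2+1}(F)]$ because $\gamma_{c_1+n+1}(F)\subseteq\gamma_{c_2+n+1}(F)$. With the modulus corrected in this way, your first two displayed identities already finish the proof; you should state that correction explicitly rather than attempt the absorption, which is not provable as stated.
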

\begin{proof}
It is easy to see that for any normal subgroups of a group such as
$A$, $B$, $C$ and $H$ if $A\cong B \pmod C$ then we have $[A,H]\cong
[B,H] \pmod{[C,H]}$; now it is enough to show that $[R,F]\cong [S,F]
\pmod{\gamma_{n+2}(F)}$ which is straightforward. Now in [13] it is proved that
$$\frac{[R,\ _{c_1}F]}{\gamma_{c_1+n+1}(F)}$$ is the free abelian group with the
basis $\bigcup_{i=m}^{m+t-1}C_i$ in which $C_{i}=\{b^{m_{j+1}} \ | \
b$ is a basic commutator on $X$ and $x_{k+j}$ appears in $b\}$. The
same argument does hold for $c_2$.
\end{proof}

\begin{lem}
If $(p,r_1)=1$ for all prime $p$ less than or equal to $l-1$, then
$[S,\ _{c+1-1}F]\gamma _{c+l}(F)/ \gamma _{c+l}(F)$ is the free
abelian group with a basis $D_{c},$ where $$D_{c}=\{ b^{r_j} |
 \ b\ is\ a\ basic\ commutator\ of\ weight\ c+i\ on\ $$ $$ \ \ \ \
 \ x_1,...,x_m,...,x_{m+j}\ s.t. \ x_{m+j}\ appears\ in\ b \ 1\leq j \leq t \ and \ 1\leq i \leq l-1 \}.$$
\end{lem}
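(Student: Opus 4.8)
The plan is to study $[S,\ _cF]\gamma_{c+l}(F)/\gamma_{c+l}(F)$ one lower central factor at a time. By Hall's basis theorem quoted above, each quotient $\gamma_{c+i}(F)/\gamma_{c+i+1}(F)$ is free abelian on the basic commutators of weight $c+i$, and since $[S,\ _cF]\subseteq\gamma_{c+1}(F)$ only the weights $c+1,\ldots,c+l-1$ survive modulo $\gamma_{c+l}(F)$, which already matches the range $1\le i\le l-1$ in the definition of $D_c$. Because $S$ is the normal closure of $x_{m+1}^{r_1},\ldots,x_{m+t}^{r_t}$ in $F$, and conjugation changes a commutator only by terms of strictly higher weight, I would first use the standard commutator identities to reduce, modulo $\gamma_{c+l}(F)$, to the generating commutators $[x_{m+j}^{r_j},f_1,\ldots,f_c]$, and then, by multilinearity of the commutator modulo one higher weight, to the case in which each $f_k$ is a basic commutator.

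The core is a collection computation. Applying the Hall--Petrescu collection formula to $[x_{m+j}^{r_j},f_1,\ldots,f_c]$ produces $[x_{m+j},f_1,\ldots,f_c]^{r_j}$ times a product of commutators in which the generator $x_{m+j}$ occurs more than once, the part in which it occurs exactly $k$ times contributing weight $c+k$ with exponent $\binom{r_j}{k}$. Expanding $[x_{m+j},f_1,\ldots,f_c]$ into basic commutators by the Jacobi identity, the weight-$(c+i)$ contribution of such a generator is $r_j$ times an integer combination of basic commutators of weight $c+i$ each containing $x_{m+j}$. Here the divisibility $r_{i+1}\mid r_i$ fixes the correct exponent: if $b$ is a basic commutator whose highest-indexed constituent is a finite generator $x_{m+j(b)}$, then every finite generator occurring in $b$ has index $\le j(b)$ and so exponent $r_{j'}$ a multiple of $r_{j(b)}$, whence any coefficient that $b$ receives is a multiple of $r_{j(b)}$, while a suitable choice of the $f_k$ realizes $r_{j(b)}b$ itself by a triangular argument in the ordering of basic commutators. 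This identifies $b^{r_{j(b)}}$ as the generator attached to $b$, exactly as in $D_c$.

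The coprimality hypothesis is what prevents the collection corrections from manufacturing smaller coefficients, and controlling this is where I expect the genuine work to lie. A correction term in which $x_{m+j}$ occurs $k$ times sits in weight $c+k$ and carries the factor $\binom{r_j}{k}$; for it to stay inside the subgroup generated by $r_j$ times the basic commutators one needs $r_j\mid\binom{r_j}{k}$. From the identity $k\binom{r_j}{k}=r_j\binom{r_j-1}{k-1}$ together with $\gcd(k,r_j)=1$ this divisibility follows at once, and $\gcd(k,r_j)=1$ for every $2\le k\le l-1$ is equivalent to $(p,r_j)=1$ for all primes $p\le l-1$. Because $r_j\mid r_1$, the assumption $(p,r_1)=1$ for all primes $p\le l-1$ secures this for every $j$, and since only weights $c+k$ with $k\le l-1$ survive modulo $\gamma_{c+l}(F)$, precisely the binomials $\binom{r_j}{k}$ with $k\le l-1$ are relevant. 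Thus the hypothesis is used exactly to the extent needed, and it is the indispensable ingredient of this step.

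Putting the graded computations together, the image of $[S,\ _cF]$ in $\gamma_{c+i}(F)/\gamma_{c+i+1}(F)$ is the free abelian group on $\{\,r_{j(b)}\,b\,\}$, with $b$ ranging over the basic commutators of weight $c+i$ whose top constituent is a finite generator, and the reductions of the weight-$(c+i)$ elements of $D_c$ map onto this basis. Ordering $D_c$ by weight makes the whole picture triangular with respect to the filtration by the $\gamma_{c+i}(F)$, so $[S,\ _cF]\gamma_{c+l}(F)/\gamma_{c+l}(F)$ is built from finitely many extensions each of whose successive quotients is free abelian; such extensions split, whence the group is free abelian, and the elements of $D_c$, mapping to bases of the graded quotients, form a basis. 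This is the assertion of the lemma.
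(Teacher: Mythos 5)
The paper gives no proof of this lemma: it is imported from the source cited as [13] (Hokmabadi's thesis and the Mashayekhy--Hokmabadi--Mohammadzadeh paper), so there is no in-paper argument to compare yours against. Your reconstruction is the standard Struik-type collection argument that those sources use, and it is essentially correct. You rightly reduce to the graded quotients $\gamma_{c+i}(F)/\gamma_{c+i+1}(F)$, free abelian on basic commutators by Hall's theorem; you correctly locate the role of the arithmetic hypothesis, namely that a correction term of weight $c+k$ carries coefficients $\binom{r_j}{s}$ with $s\le k\le l-1$, and $r_j\mid\binom{r_j}{s}$ follows from $s\binom{r_j}{s}=r_j\binom{r_j-1}{s-1}$ once every prime $p\le l-1$ is coprime to $r_j$, which the hypothesis on $r_1$ gives since $r_j\mid r_1$; and the chain $r_{j+1}\mid r_j$ is exactly what makes $r_{j(b)}$, for $j(b)$ the top finite index occurring in $b$, the exponent attached to $b$ (lower-indexed generators contribute exponents that are multiples of $r_{j(b)}$). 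The one step you assert rather than prove is the surjectivity half: that suitable choices of the $f_k$ actually realize $b^{r_{j(b)}}$ modulo higher weight for every eligible basic commutator $b$, and not merely a proper subgroup of the span of such elements. That is the triangularity of the expression of the collected commutators $[x_{m+j},f_1,\ldots,f_c]$ in the basic-commutator basis; it is true and standard, but it is where the real bookkeeping of the cited proof lives, and it is also what (together with the linear independence you note at each stage) controls $[S,\ _cF]\cap\gamma_{c+i+1}(F)$ so that the finite filtration by free abelian quotients splits as you claim. With that step written out, your argument is a complete proof of the lemma the paper only quotes.
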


\begin{thm}
With the above notations and assumptions we have
$$[R,\ _{c_1}F,\ga_{c_2+1}(F)]\cong \langle D_1 \rangle \pmod {[\gamma_{c_1+n+1}(F),\gamma_{c_1+1}(F)][\gamma_{c_2+n+1}(F),\gamma_{c_2+1}(F)]}$$
in which $D_1=\{[b,c] \ | \ b\in D_{c_1}$ and $c$ is a basic
commutator on $X$ with $c_2+1\leq wt(c)\leq c_2+n\}$, and
$$[R,\ _{c_2}F,\ga_{c_1+1}(F)]\cong \langle D_2 \rangle \pmod {[\gamma_{c_1+n+1}(F),\gamma_{c_1+1}(F)][\gamma_{c_2+n+1}(F),\gamma_{c_2+1}(F)]}$$
in which $D_2=\{[b,c] \ | \ b\in D_{c_2}$ and $c$ is a basic
commutator on $X$ with $c_1+1\leq wt(c)\leq c_1+n\}$.
\end{thm}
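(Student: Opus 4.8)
The plan is to push everything down to the subgroup $S=\langle x_i^{k_i}\rangle^F$ and then read off the surviving generators from the basis supplied by the two preceding lemmas. Write $\Delta=[\ga_{c_1+n+1}(F),\ga_{c_1+1}(F)][\ga_{c_2+n+1}(F),\ga_{c_2+1}(F)]$ for the denominator. First I would invoke the lemma giving $[R,\ _{c_1}F,\ga_{c_2+1}(F)]\equiv[S,\ _{c_1}F,\ga_{c_2+1}(F)]$ modulo $[\ga_{c_1+n+1}(F),\ga_{c_1+1}(F)]$; since this modulus is one of the two factors of $\Delta$, it suffices to compute $[S,\ _{c_1}F,\ga_{c_2+1}(F)]$ modulo $\Delta$. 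The only numerical input I actually need for this is the standing assumption $c_1\ge c_2$ of Theorem 2.1 (so that $\ga_{c_1+1}(F)\subseteq\ga_{c_2+1}(F)$ and $\ga_{c_1+n+1}(F)\subseteq\ga_{c_2+n+1}(F)$), together with the coprimality hypothesis, which feeds the $D_c$-lemma.

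Next I would write $[S,\ _{c_1}F,\ga_{c_2+1}(F)]=[[S,\ _{c_1}F],\ga_{c_2+1}(F)]$ and apply the lemma on the basis $D_c$ with $c=c_1$ and $l=n+1$ (the hypothesis $(p,r_1)=1$ for all primes $p\le c_1+c_2+n$ certainly covers $p\le n=l-1$) to get $[S,\ _{c_1}F]\ga_{c_1+n+1}(F)=\langle D_{c_1}\rangle\ga_{c_1+n+1}(F)$. Thus each generator $b$ of $[S,\ _{c_1}F]$ is a product $d\tau$ of a $D_{c_1}$-part $d\in\langle D_{c_1}\rangle$ (powers $\tilde b^{\,r_j}$ of basic commutators of weight between $c_1+1$ and $c_1+n$) and a tail $\tau\in\ga_{c_1+n+1}(F)$. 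Expanding a generator of $[[S,\ _{c_1}F],\ga_{c_2+1}(F)]$ as $[d\tau,c]=[d,c]\,[[d,c],\tau]\,[\tau,c]$, with $c$ a basic commutator of $\ga_{c_2+1}(F)$, splits it into a $D_1$-type main term $[d,c]$ and two correction terms.

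The heart of the argument, and the step I expect to be the main obstacle, is showing every correction term lies in $\Delta$, so that modulo $\Delta$ only the clean generators $[b,c]$ with $b\in D_{c_1}$ and $c_2+1\le wt(c)\le c_2+n$ survive. There are three kinds of spillover, each absorbed into the factor $[\ga_{c_2+n+1}(F),\ga_{c_2+1}(F)]$ of $\Delta$ by a weight count that uses $c_1\ge c_2$: (i) the tail gives $[\tau,c]\in[\ga_{c_1+n+1}(F),\ga_{c_2+1}(F)]\subseteq[\ga_{c_2+n+1}(F),\ga_{c_2+1}(F)]$; (ii) any basic commutator $c$ with $wt(c)\ge c_2+n+1$ lies in $\ga_{c_2+n+1}(F)$ while $b\in\ga_{c_1+1}(F)\subseteq\ga_{c_2+1}(F)$, so $[b,c]\in[\ga_{c_2+n+1}(F),\ga_{c_2+1}(F)]$; (iii) the conjugation correction $[[d,c],\tau]$ has $[d,c]\in\ga_{c_1+1}(F)\subseteq\ga_{c_2+1}(F)$ and $\tau\in\ga_{c_2+n+1}(F)$, hence again lies in that factor. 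Step (ii) is exactly what produces the upper bound $wt(c)\le c_2+n$ in the definition of $D_1$, and step (i) disposes of the high-weight tail of $[S,\ _{c_1}F]$. This yields $[S,\ _{c_1}F,\ga_{c_2+1}(F)]\equiv\langle D_1\rangle\pmod\Delta$; the reverse inclusion is the same computation read backwards, noting that each $[b,c]$ with $b\in D_{c_1}$ lifts into $[S,\ _{c_1}F,\ga_{c_2+1}(F)]$ up to a term already shown to lie in $\Delta$.

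Finally I would obtain the second isomorphism by the symmetric computation, interchanging $c_1$ and $c_2$: the $D_c$-lemma with $c=c_2$ gives $[S,\ _{c_2}F]\ga_{c_2+n+1}(F)=\langle D_{c_2}\rangle\ga_{c_2+n+1}(F)$, and the three analogous spillover terms are again absorbed into $[\ga_{c_2+n+1}(F),\ga_{c_2+1}(F)]\subseteq\Delta$, this time using $\ga_{c_1+1}(F)\subseteq\ga_{c_2+1}(F)$ for the tail and $\ga_{c_1+n+1}(F)\subseteq\ga_{c_2+n+1}(F)$ for the high-weight commutators $c$. What remains modulo $\Delta$ is exactly the set of generators $[b,c]$ with $b\in D_{c_2}$ and $c_1+1\le wt(c)\le c_1+n$, i.e. $\langle D_2\rangle$. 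The only delicate part throughout is the bookkeeping of the correction terms in (i)--(iii); everything else is a direct appeal to the two preceding lemmas and the monotonicity of the lower central series.
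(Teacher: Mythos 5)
Your proposal follows essentially the same route as the paper: pass from $R$ to $S$ via the preceding congruence lemma, decompose elements of $[S,\ _{c_1}F]$ (resp.\ $[S,\ _{c_2}F]$) using the $D_c$-basis lemma and elements of $\ga_{c_2+1}(F)$ (resp.\ $\ga_{c_1+1}(F)$) using Hall's basis theorem, expand the commutators, and absorb all tail and conjugation corrections into the denominator by weight counts. Your spillover analysis (i)--(iii) is in fact a more explicit account of what the paper dismisses as ``a routine calculation,'' so the argument is correct and matches the paper's proof.
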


\begin{proof}
Let $[a,b]$ be a generator of $[R,\ _{c_1}F,\ga_{c_2+1}(F)]$ so
$a\in [R,\ _{c_1}F]$ and $b\in \gamma_{c_2+1}(F)$, using the above
lemma and Hall's theorem we have $a=\prod a_i \alpha$ and $b=\prod
b_j \beta$ in which $a_i\in D_{c_1}$, $\alpha\in
\gamma_{c_1+n+1}(F)$, $b_j$'s are basic commutators on $X$ of
weights $c_2+1,...,c_2+n$ and $\beta\in \gamma_{c_2+n+1}(F)$. Now
$[a,b]$ is a product of elements of the form $[a_i,b_j]^{f_{ij}}$,
$[a_i,\beta]^{g_i}$, $[\alpha,b_j]^{h_j}$ and $[\alpha,\beta]^k$ in
which $h_{ij}, g_i, h_j, k\in \gamma_{c_2+1}(F)$. It is enough to
show that $[a_i,b_j,f_{ij}], [a_i,\beta], [\alpha,b_j]$ and
$[\alpha,\beta]$ are all elements of \linebreak
$[\gamma_{c_1+n+1}(F),\gamma_{c_1+1}(F)]$  $[\gamma_{c_2+n+1}(F),\gamma_{c_2+1}(F)]$
and this can be done by a routine calculation.
\end{proof}

The immediate consequence of the latest theorem is that
$$\frac{[R,\ _{c_1}F,\ga_{c_2+1}(F)][R,\ _{c_2}F,\ga_{c_1+1}(F)]}{[\gamma_{c_1+n+1}(F),\gamma_{c_1+1}(F)][\gamma_{c_2+n+1}(F),\gamma_{c_2+1}(F)]}\cong \langle Y \rangle \ \ \ \ (1)$$
in which $Y=\{[b,c]^{m_{i+1}}H \ | \ b$ and $c$ are basic
commutators on $X$ with $c_1+1\leq wt(b)\leq c_1+n, c_2+1\leq
wt(c)\leq c_2+n$ and $x_{i+1}$ appears in $[b,c]\}$.

Now considering Theorem 2.1 it is easy to see that every element of
$Y$ is a power of an element of $A-C$, so $Y$ is linearly
independent and that the group $$\frac{[R,\
_{c_1}F,\ga_{c_2+1}(F)][R,\
_{c_2}F,\ga_{c_1+1}(F)]}{[\gamma_{c_1+n+1}(F),\gamma_{c_1+1}(F)][\gamma_{c_2+n+1}(F),\gamma_{c_2+1}(F)]}$$
is a free abelian group.

Now, we can state and prove the main result of this section.

\begin{thm}
Let $G\cong\mathbb{Z}\st{n}*
\mathbb{Z}\st{n}* \cdots \st{n}* \mathbb{Z}\st{n}*
\mathbb{Z}_{r_1}\st{n}* \mathbb{Z}_{r_2}\st{n}* \cdots \st{n}*
\mathbb{Z}_{r_t})$ be the $nth$ nilpotent product of cyclic groups ($m$ copies of $\mathbb{Z}$) where $r_{i+1}\mid r_i \ \ (1\leq i\leq t-1)$, $c_1+c_2+1\geq n$, $2c_2-c_1>2n-2$ and $(p,r_1)=1$ for all prime less that or equal $c_1+c_2+n$ then
$$[\mathfrak{N}_{c_1},\mathfrak{N}_{c_2}]M(\mathbb{Z}\st{n}* \mathbb{Z}\st{n}* \cdots \st{n}* \mathbb{Z}\st{n}* \mathbb{Z}_{r_1}\st{n}* \mathbb{Z}_{r_2}\st{n}* \cdots \st{n}* \mathbb{Z}_{r_t}\cong \mathbb{Z}^{(d_1)}\oplus \mathbb{Z}_{r_1}^{(d_2)}\oplus\cdots \oplus\mathbb{Z}_{r_t}^{(d_t)}$$ in which

$(i)$ if $c_2+n<c_1+1$, then\\
$$d_1=(\sum_{i=c_1+1}^{c_1+n}\chi_i(m))(\sum_{c_2+1}^{c_1}\chi_i(m))$$\\
and\\
$$d_k=(\sum_{i=c_1+1}^{c_1+n}\chi_i(m+k+1)-\sum_{i=c_1+1}^{c_1+n}\chi_i(m+k))(\sum_{i=c_2+1}^{c_2+n}\chi_i(m+k))+$$\\
$$(\sum_{i=c_1+1}^{c_1+n}\chi_i(m+k))(\sum_{i=c_2+1}^{c_2+n}\chi_i(m+k+1)-\sum_{i=c_2+1}^{c_2+n}\chi_i(m+k))$$\\

$(ii)$ if $c_2+n\geq c_1+1$, then\\
$$d_1=(\sum_{i=c_1+1}^{c_1+n}\chi_i(m))(\sum_{c_2+1}^{c_1}\chi_i(m))+\chi_2(\sum_{i=c_1+1}^{c_2+n}\chi_i(m))$$\\
and\\
$$d_k=(\sum_{i=c_1+1}^{c_1+n}\chi_i(m+k+1)-\sum_{i=c_1+1}^{c_1+n}\chi_i(m+k))(\sum_{i=c_2+1}^{c_2+n}\chi_i(m+k))+$$\\
$$(\sum_{i=c_1+1}^{c_1+n}\chi_i(m+k))(\sum_{i=c_2+1}^{c_2+n}\chi_i(m+k+1)-\sum_{i=c_2+1}^{c_2+n}\chi_i(m+k))+$$\\
$$\chi_2(\sum_{i=c_1+1}^{c_2+n}\chi_i(m+k+1)-\chi_2(\sum_{i=c_1+1}^{c_2+n}\chi_i(m+k)$$
\end{thm}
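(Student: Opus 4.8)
The plan is to start from the quotient description of $[\mathfrak{N}_{c_1},\mathfrak{N}_{c_2}]M(G)$ established just before the statement, which presents it as the quotient of $[\mathfrak{N}_{c_1},\mathfrak{N}_{c_2}]M(\mathbb{Z}\st{n}*\cdots\st{n}*\mathbb{Z})$ by the group appearing in equation $(1)$. By Theorem 2.1 the numerator is free abelian on the image of $A-C$, and by $(1)$ the denominator is the subgroup $\langle Y\rangle$, each of whose generators is a power $[b,c]^{r_j}$ of one of those free generators $[b,c]\in A-C$. So the first step is to observe that the inclusion $\langle Y\rangle\hookrightarrow[\mathfrak{N}_{c_1},\mathfrak{N}_{c_2}]M(\mathbb{Z}\st{n}*\cdots\st{n}*\mathbb{Z})$ is already diagonal with respect to the basis $A-C$: every relation is a power of a single basis element, so the quotient splits as a direct sum indexed by $A-C$, each summand being either $\mathbb{Z}$ (when no power of that generator lies in $\langle Y\rangle$) or a finite cyclic group $\mathbb{Z}/r\mathbb{Z}$.

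Next I would pin down the order $r$ attached to each surviving generator $a=[b,c]$. An element $[b,c]^{r_j}$ lies in $\langle Y\rangle$ precisely when the generator $x_{m+j}$ of $\mathbb{Z}_{r_j}$ occurs in $[b,c]$; if several finite-order generators occur, we obtain several relations on the same basis element, and the order of $a$ in the quotient is the gcd of the corresponding exponents. Here the divisibility hypothesis $r_{i+1}\mid r_i$ does the work: among all finite generators occurring in $[b,c]$ the one of largest index $j$ contributes the smallest exponent $r_j$, and this single relation dominates, so the order of $a$ is exactly $r_{j}$ for that largest index (and $a$ contributes a free summand $\mathbb{Z}$ when no finite generator occurs). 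The hypothesis $(p,r_1)=1$ for every prime $p\le c_1+c_2+n$ is what guarantees, via Lemma 2.3 and Theorem 2.4, that $\langle Y\rangle$ meets the free basis cleanly with no spurious $p$-torsion, so that this bookkeeping is exact.

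It then remains to count, grouping the generators of $A-C$ by their assigned order. For $d_1$ I would count the $[b,c]\in A-C$ involving only the infinite-cyclic generators $x_1,\dots,x_m$; for the finite part I would count, for each index, those $[b,c]\in A-C$ in which a fixed finite generator is the highest-index one occurring. All these counts come from the Witt formula (Theorem 1.5): the number of weight-$i$ basic commutators on $x_1,\dots,x_{m+k+1}$ that actually involve $x_{m+k+1}$ equals $\chi_i(m+k+1)-\chi_i(m+k)$, and the number involving only $x_1,\dots,x_m$ is $\chi_i(m)$. Because an element of $A$ has the bilinear shape $[\beta,\alpha]$, with $\beta$ ranging over weights $c_1+1,\dots,c_1+n$ and $\alpha$ over weights $c_2+1,\dots,c_2+n$, each count factors as a product of two such Witt sums, one for the $\beta$-factor and one for the $\alpha$-factor, after the contribution of the excluded set $C$ and of the diagonal $wt(\beta)=wt(\alpha)$ has been accounted for.

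The main obstacle will be the combinatorial bookkeeping rather than any structural subtlety. Two points need care. First, one must subtract the exceptional commutators of $C$ from the naive products of Witt sums, which is what contracts the summation ranges (for instance the upper limit appearing in the $d_1$ formula). Second, everything splits along the case distinction $c_2+n<c_1+1$ versus $c_2+n\ge c_1+1$: in the non-overlapping case the constraint $\beta>\alpha$ is automatic since $wt(\alpha)<wt(\beta)$ always, giving the clean products of part $(i)$, whereas in the overlapping case the weight windows $[c_1+1,c_1+n]$ and $[c_2+1,c_2+n]$ intersect, so pairs with $wt(\beta)=wt(\alpha)$ are admissible. Counting the unordered pairs $\beta>\alpha$ of equal weight drawn from the basic commutators of weights in $[c_1+1,c_2+n]$ produces exactly the extra terms of the form $\chi_2(\sum_{i=c_1+1}^{c_2+n}\chi_i(\,\cdot\,))$ in part $(ii)$; setting this diagonal count up correctly, simultaneously for the free part and for each torsion stratum, is the delicate step. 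Once these counts are assembled, grouping the surviving generators by their order $r_j$ yields the claimed decomposition $\mathbb{Z}^{(d_1)}\oplus\mathbb{Z}_{r_1}^{(d_2)}\oplus\cdots\oplus\mathbb{Z}_{r_t}^{(d_t)}$.
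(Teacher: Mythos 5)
Your proposal takes essentially the same route as the paper: both start from the presentation of the multiplier as the free abelian group on $A-C$ modulo the subgroup $\langle Y\rangle$ of equation (1), whose generators are powers of single basis elements, and then count the surviving generators by their orders using the Witt formula, splitting into the two cases according to whether the weight windows $[c_1+1,c_1+n]$ and $[c_2+1,c_2+n]$ overlap. The paper's own proof is only a two-line sketch citing an earlier corollary for $|A-C|$; your write-up supplies the omitted details (notably the use of $r_{i+1}\mid r_i$ to identify the exact order of each torsion generator) but is not a genuinely different argument.
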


\begin{proof}
First note that in case $(i)$ if $c_2+n<c_1+1$, then $A-C=A$ and in
case $(ii)$ if $c_2+n\geq c_1+1$, then
$|A-C|=(\sum_{i=c_1+1}^{c_1+n}\chi_i(m))(\sum_{c_2+1}^{c_1}\chi_i(m))+\chi_2(\sum_{i=c_1+1}^{c_2+n}\chi_i(m))$
\cite[Corollary 2.5]{Parvizi18}. Now it is easy to see that the number of
elements of $A-C$ which are not in $Y$ is $d_0$ and the number of
the elements of $Y$ with power $r_k$ is $d_k$.
\end{proof}

%%%%%%%%%%%%%%%%%%%%%%%%%%%%%%%%%%%%%%%%%%%%%%%%%%%%%%%%%%%%%%%%%%%%%%%%%%%%%%%%%%%%%%%%%%%%%%%%%%%%%%%%%%%%%%%%
%%%%%%%%%%%%%%%%%%%%%%%%%%%%%%%%%%%%%%%%%%%%%%%%%%%%%%%%%%%%%%%%%%%%%%%%%%%%%%%%%%%%%%%%%%%%%%%%%%%%%%%%%%%%%%%%
%%%%%%%%%%%%%%%%%%%%%%%%%%%%%%%%%%%%%%%%%%%%%%%%%%%%%%%%%%%%%%%%%%%%%%%%%%%%%%%%%%%%%%%%%%%%%%%%%%%%%%%%%%%%%%%%

\section{Capability}
\label{sec3}
It was Hall \cite{PHall} who initiated studying capable groups in order to classify $p$-groups. Several papers were devoted to capability and varietal generalization such as Burns and Ellis \cite{Burns}, Ellis \cite{Elliscapable}, Moghaddam and Kayvanfar \cite{MoghaddamKayvanfar}, and Magidin \cite{Magidin1,Magidin2,Magidin3,Magidin4}. Recently Hokmabadi \cite{Hokmabadi} classified varietal capable groups in the class of nilpotent products of cyclic groups with respect to the variety of polynilpotent groups under some conditions. In this section we intend to determine varietal capability of such groups with respect to the variety $[\mathfrak{N_{c_1}},\mathfrak{N_{c_2}}]$ with some conditions. To do this we need the following theorem and lemma.

\begin{thm}
Let $H\cong \mathbb{Z}\st{c+n}* \mathbb{Z}\st{c+n}* \cdots \st{c+n}* \mathbb{Z}\st{c+n}* \mathbb{Z}_{r_1}\st{c+n}* \mathbb{Z}_{r_2}\st{c+n}* \cdots \st{c+n}* \mathbb{Z}_{r_t}$ where $r_{i+1}\mid r_i \ \ (1\leq i\leq t-1)$ and all prime numbers smaller than $c+n$ are coprime to $r_1$, then

\[ Z_c(H)=\left \{ \begin{array}{ll}
      \langle \gamma_{n+1}(H), x_1^{r_2}\rangle & \ m=0, \\ \langle \gamma_{n+1}(H), y_1^{r_1}\rangle & \ m=1, \\ \gamma_{n+1}(H) & \
m\geq 2.
\end{array} \right.  \]
\end{thm}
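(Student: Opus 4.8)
The plan is to prove the two inclusions, using throughout the standard description of the upper central series of a nilpotent group, namely $Z_c(H)=\{h\in H\mid [h,\ _cH]=1\}$, where $[h,\ _cH]$ denotes the iterated commutators $[h,y_1,\dots,y_c]$ with $y_i\in H$. Since $H$ is the $(c+n)$th nilpotent product of cyclic groups, it is nilpotent of class $c+n$, i.e. $\gamma_{c+n+1}(H)=1$. This at once gives the easy inclusion: for $w\in\gamma_{n+1}(H)$ we have $[w,\ _cH]\subseteq\gamma_{c+n+1}(H)=1$, so $\gamma_{n+1}(H)\subseteq Z_c(H)$ in all three cases. Hence it remains only to identify $Z_c(H)$ modulo $\gamma_{n+1}(H)$, i.e. to decide, for each $h\in H$ with nontrivial image in $H/\gamma_{n+1}(H)$, whether $h\in Z_c(H)$.

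The main tool is a weight-by-weight reduction. With $F$ free on $X=\{x_1,\dots,x_{m+t}\}$ as in Section 2 and $x_1$ the least generator, take $h\notin\gamma_{n+1}(H)$, let $w\le n$ be the least weight for which $h$ has nontrivial image in $\gamma_w(H)/\gamma_{w+1}(H)$, and write that image via Hall's basis theorem (Theorem 1.4) as $\prod_\beta c_\beta^{a_\beta}$ with $c_\beta$ basic commutators of weight $w$. For generators $y_1,\dots,y_c$ the leading term of $[h,y_1,\dots,y_c]$ in $\gamma_{w+c}(H)/\gamma_{w+c+1}(H)$ is $\prod_\beta [c_\beta,y_1,\dots,y_c]^{a_\beta}$. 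I would then argue, class by class and using the freeness of the associated graded, that $h\in Z_c(H)$ forces $o_\beta\mid a_\beta$ for every $\beta$, where $o_\beta$ is the largest order attained by a \emph{nontrivial} bracket $[c_\beta,y_1,\dots,y_c]$ over all choices of generators. The orders of basic commutators entering here are exactly those recorded by Lemma 2.3 and \cite{MHM}: infinite order when only $x_1,\dots,x_m$ occur, and order $r_j$ when the top generator is $x_{m+j}$; it is precisely the hypotheses $r_{i+1}\mid r_i$ and $(p,r_1)=1$ for every prime $p\le c+n$ that keep these orders clean through weight $c+n$.

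The decisive combinatorial point is the computation of $o_\beta$ and the exceptional role of $x_1$. Whenever $\mathrm{wt}(c_\beta)\ge 2$, or $c_\beta$ is a generator other than $x_1$, one may bracket $c_\beta$ repeatedly with $x_1$ to obtain nontrivial commutators whose top generator is unchanged; then $o_\beta=\mathrm{ord}(c_\beta)$, and the divisibility $o_\beta\mid a_\beta$ pushes the whole weight-$w$ part of $h$ into $\gamma_{w+1}(H)$, so no such $c_\beta$ contributes outside $\gamma_{n+1}(H)$. The sole exception is $c_\beta=x_1$: since $[x_1,x_1]=1$, a nontrivial bracket must introduce a strictly larger generator, and minimizing the resulting top index maximizes the order. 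This gives $o_{x_1}=r_2$ when $m=0$ (the cheapest new generator is $x_2$, of order $r_2$), $o_{x_1}=r_1$ when $m=1$ (there is no second infinite generator, so one must bracket with the order-$r_1$ generator), and $o_{x_1}=\infty$ when $m\ge 2$ (bracketing with the second infinite generator $x_2$ keeps the order infinite, forcing $a_{x_1}=0$).

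Combining these, every $h\in Z_c(H)$ satisfies $h\equiv x_1^{a}\pmod{\gamma_{n+1}(H)}$ with $o_{x_1}\mid a$, giving $Z_c(H)\subseteq\langle\gamma_{n+1}(H),x_1^{o_{x_1}}\rangle$ and hence the three displayed cases (with $x_1=y_1$ the infinite generator when $m=1$). It then remains to verify the reverse inclusion, that $x_1^{o_{x_1}}$ genuinely lies in $Z_c(H)$ and not merely has trivial leading term; I would deduce this from the Hall--Petrescu collection formula, using that every nontrivial $[x_1,y_1,\dots,y_j]$ with $j\ge 1$ has order dividing $o_{x_1}$, so that raising $x_1$ to the $o_{x_1}$th power annihilates all commutator terms occurring in $[x_1^{o_{x_1}},y_1,\dots,y_c]$. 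I expect the main obstacle to be exactly this order-sensitive commutator calculus, namely the weight-by-weight reduction to the divisibilities $o_\beta\mid a_\beta$ together with the exact evaluation of $o_\beta$, since it rests on controlling the associated graded Lie ring of $H$ through weight $c+n$, which is what the coprimality hypothesis is there to guarantee.
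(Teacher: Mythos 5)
First, a point of comparison: the paper does not prove this theorem at all --- its ``proof'' is the single line ``See \cite{Hokmabadi}'', deferring entirely to Hokmabadi's thesis. So there is no in-paper argument to measure yours against; the closest internal analogue is the proof of Theorem 3.3, and your sketch uses exactly that machinery (checking centrality on generators, Hall's basis theorem for the graded quotients, Struik's orders of basic commutators as in Lemma 2.3 and Lemma 3.2, and the Hall--Petrescu expansion together with the coprimality hypothesis to force $r_j\mid\binom{r_2}{i}$ for $i\le c+n$). Your computation of the critical orders $o_\beta$ is correct in all three cases and reproduces the displayed answer, and your Hall--Petrescu argument for the inclusion $\langle\gamma_{n+1}(H),x_1^{o_{x_1}}\rangle\subseteq Z_c(H)$ is essentially the same computation the paper carries out for $V^{\star}(H)$ in Theorem 3.3.

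The gap is in the forward inclusion, at the step you yourself flag. From the vanishing of $\prod_\beta[c_\beta,y_1,\ldots,y_c]^{a_\beta}$ in $\gamma_{w+c}(H)/\gamma_{w+c+1}(H)$ for every choice of the $y_i$ you want to conclude $o_\beta\mid a_\beta$ for each $\beta$ separately. This needs two facts you assert but do not establish: (i) that $\gamma_{w+c}(H)/\gamma_{w+c+1}(H)$ really is the direct sum of cyclic groups on basic commutators with the orders prescribed by Lemma 2.3 (this is where the hypothesis that $(p,r_1)=1$ for all primes $p$ up to $c+n$ enters, via Struik's consistency theorems); and (ii) that for a suitable choice of the $y_i$ (say all $y_i=x_1$, or $y_i=x_2$ when $c_\beta=x_1$) the elements $[c_\beta,y_1,\ldots,y_c]$, rewritten in the basic-commutator basis of weight $w+c$, remain independent modulo the relevant torsion --- injectivity of the iterated adjoint map on a torsion graded Lie ring is not automatic and is exactly what the collection process must control. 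A smaller structural point: your weight-by-weight descent needs the reverse inclusion already in hand at weight one, since after extracting $x_1^{a_1}$ with $o_{x_1}\mid a_1$ you must know that $x_1^{a_1}$ itself lies in $Z_c(H)$ in order to replace $h$ by $hx_1^{-a_1}$ and continue to weight two. None of this looks fatal, but it is the real substance of the proof, and it is presumably what the cited thesis spends its pages on.
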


\begin{proof}
See \cite{Hokmabadi}.
\end{proof}

\begin{lem}
Let $A_i=\langle a_i \mid a_i^{\alpha_i} \rangle$, $1\leq i\leq t$ and $G=A_1\st{n}* \cdots \st{n}* A_t$
with $n\geq 2$ and all prime number less
than or equal to $n$ are coprime to $\alpha_1$. If $u\in G$ be an outer commutator on $a_1,\cdots,a_n$ such that only $a_{i_1},\cdots,a_{i_k}$ do appear in $u$ then $u^N=1$ in which $N=(a_{i_1},\cdots,a_{i_k})$ (the greatest common divisor of $a_{i_1},\cdots,a_{i_k}$).
\end{lem}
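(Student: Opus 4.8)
The plan is to reduce the statement to a prime-by-prime calculation and then run a \emph{downward} induction on the weight of the outer commutator, using the collection process. First I would observe that, since each $A_i$ is finite and $G$ is nilpotent of class at most $n$ (the $n$th nilpotent product kills $\gamma_{n+1}$), the group $G$ is a finite nilpotent group, hence the direct product of its Sylow subgroups $G=\prod_p G_p$. Because $\alpha_{i+1}\mid\alpha_i$ forces $\alpha_i\mid\alpha_1$, the coprimality hypothesis makes every prime dividing any $\alpha_i$, and therefore every prime dividing $|G|$, strictly larger than $n$. Writing $N=\prod_p N_p$ with $N_p$ the $p$-part of $N$, the claim $u^N=1$ is equivalent to $u_p^{N_p}=1$ for every $p$; since the projection $G\to G_p$ is a homomorphism, $u_p$ is again an outer commutator value on the images of $a_{i_1},\dots,a_{i_k}$, so it suffices to work inside a single $G_p$. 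There $G_p$ has class at most $n<p$, each surviving generator has $p$-power order, and $N_p=p^{e}$ is the smallest such order (if some image is trivial then $N_p=1$ and $u_p=1$ already). I fix a generator $a:=a_{i_l}$ realizing this minimum, so that $a^{N_p}=1$.

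Working in $G_p$, I would prove by downward induction on the weight $w$ of the outer commutator value $u$ (from $w$ equal to the class down to $w=1$) that every such value on these generators is annihilated by $p^{e}$. For the inductive step, mark one occurrence of $a$ as a variable $y$, so that $u=\tilde u(a)$ for the one-variable outer commutator expression $\tilde u(y)=\delta(\dots,y,\dots)$, and substitute $y=a^{N_p}$. On one hand $\tilde u(a^{N_p})=\tilde u(1)=1$, since an outer commutator with a trivial entry is trivial. On the other hand the collection process expresses
$$\tilde u(a^{N_p})=u^{N_p}\prod_{j\ge 2} w_j^{\binom{N_p}{j}},$$
where each $w_j$ is an outer commutator value of weight $w+j-1>w$ in which $a$ occurs at least twice while every one of $a_{i_1},\dots,a_{i_k}$ still occurs (they are all already present in $u=\tilde u(a)$). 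Thus each $w_j$ is an outer commutator value on the \emph{same} generating set, of strictly larger weight, so the induction hypothesis gives $w_j^{\,p^{e}}=1$. Finally, because $2\le j\le n<p$, Kummer's count of carries yields $v_p\binom{p^{e}}{j}=e$, i.e. $N_p=p^{e}$ divides $\binom{N_p}{j}$; hence $w_j^{\binom{N_p}{j}}=1$ term by term, the whole product is trivial, and $u^{N_p}=1$ follows. The base case $w=\text{class}$ is immediate, as then every $w_j$ has weight exceeding the class and vanishes.

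The main obstacle I anticipate is making the displayed collection identity fully rigorous: one must justify, via the collection process for outer commutator words, both that substituting a power $a^{N_p}$ into a single slot multiplies the value by $N_p$ modulo a product of correction terms $w_j^{\binom{N_p}{j}}$, and the structural point that each $w_j$ is again an \emph{outer} commutator value involving exactly the same distinct generators $a_{i_1},\dots,a_{i_k}$ (with $a$ repeated). Once this bookkeeping is settled, the two numerical ingredients combine painlessly: the divisibility $N_p\mid\binom{N_p}{j}$ for $j<p$ is precisely where the hypothesis that all small primes are coprime to $\alpha_1$ (hence $p>n\ge\text{class}$) enters, and together with the inductive annihilation of the higher-weight $w_j$ it kills each correction factor \emph{individually}, so no delicate control of non-commuting products is needed.
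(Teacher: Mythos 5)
The paper does not actually prove this lemma: its ``proof'' is the single line ``See \cite{Struik}'', and the statement is (a consequence of) Struik's theorem on the orders of commutators in nilpotent products of cyclic groups. Your argument is essentially a reconstruction of Struik's own method --- induction on weight driven by the collection process together with the divisibility $N\mid\binom{N}{j}$ for $j\le n$ --- so in spirit it is the same proof, carried out rather than outsourced. Two adjustments would improve it. First, the Sylow reduction is both unnecessary and fragile. Unnecessary, because the only arithmetic input you need is that $N$ divides $\binom{N}{j}$ for $2\le j\le n$, and this holds integrally (not merely one prime at a time) as soon as every prime divisor of $N$ exceeds $n$, since $v_p\bigl(\tbinom{N}{j}\bigr)\ge v_p(N)-v_p(j)=v_p(N)$; so the downward induction can be run with $N$ itself. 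Fragile, because in the setting where the lemma is actually applied (Theorem 3.4) the ambient group has infinite cyclic factors, so $G$ need not be finite and has no Sylow decomposition; and because your step ``every prime dividing $|G|$ exceeds $n$'' invokes $\alpha_{i+1}\mid\alpha_i$, which is a hypothesis of the surrounding theorems but not of this lemma. As stated the lemma only constrains the prime divisors of $\alpha_1$, so if $1\notin\{i_1,\dots,i_k\}$ the hypotheses give no control on the primes dividing $N$; this is a defect of the statement itself, but your write-up should make explicit that it is importing the stronger hypothesis rather than deriving it.

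Second, the step you flag as ``the main obstacle'' is not mere bookkeeping: the identity $\tilde u(a^{M})=u^{M}\prod_{j}w_j^{\binom{M}{j}}$, together with the structural claims that each $w_j$ is a product of commutator values of strictly larger weight in which all of $a_{i_1},\dots,a_{i_k}$ still occur, is the substantive content of Struik's theorem. It is genuinely delicate when the marked slot is not innermost (one must expand a commutator whose entries are themselves products of collected terms, and verify that the exponents $\binom{M}{j}$ survive the rebracketing), and the correction terms are in general products of such commutators rather than single outer commutator values, so the inductive hypothesis must be phrased for products. As submitted, your proposal is a correct and well-organized outline of the standard proof with its central technical lemma asserted rather than established; to stand on its own it would need either that collection argument in full or, more economically, the same citation to Struik that the paper uses.
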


\begin{proof}
See \cite{Struik}.
\end{proof}

Now let $G\cong \mathbb{Z}\st{c+1}* \mathbb{Z}\st{c+1}* \cdots \st{c+1}* \mathbb{Z}\st{c+1}* \mathbb{Z}_{r_1}\st{c+1}* \mathbb{Z}_{r_2}\st{c+1}* \cdots \st{c+1}* \mathbb{Z}_{r_t}$ in which $c=c_1+c_2+1$. Similar to Theorem 3.1 we can prove

\begin{thm}
Let $H\cong \mathbb{Z}\st{c+n}* \mathbb{Z}\st{c+n}* \cdots \st{c+n}* \mathbb{Z}\st{c+n}* \mathbb{Z}_{r_1}\st{c+n}* \mathbb{Z}_{r_2}\st{c+n}* \cdots \st{c+n}* \mathbb{Z}_{r_t}$ where $r_{i+1}\mid r_i \ \ (1\leq i\leq t-1)$ and all prime numbers smaller than $c+n$ are coprime to $r_1$, then

\[ V^{\star}(H)=\left \{ \begin{array}{ll}
      \langle \gamma_{n+1}(H), x_1^{r_2}\rangle & \ m=0, \\ \langle \gamma_{n+1}(H), y_1^{r_1}\rangle & \ m=1, \ \ \ \ \ \ \ \ (1) \\ \gamma_{n+1}(H) & \
m\geq 2.
\end{array} \right.  \]
\end{thm}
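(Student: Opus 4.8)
The plan is to prove the identity $V^{\star}(H)=Z_c(H)$, where $c=c_1+c_2+1$ and $V^{\star}(H)$ denotes the marginal subgroup of $H$ relative to the variety $[\mathfrak{N}_{c_1},\mathfrak{N}_{c_2}]$, whose verbal subgroup is $[\gamma_{c_1+1}(F),\gamma_{c_2+1}(F)]$; once this is done the three cases are exactly the output of Theorem 3.1. The basic structural fact I would use throughout is that $H$, being the $(c+n)$th nilpotent product of the given cyclic groups, is nilpotent of class at most $c+n$, so $\gamma_{c+n+1}(H)=1$; this governs all of the weight bookkeeping below.

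For the inclusion $Z_c(H)\subseteq V^{\star}(H)$ I would argue varietally. For every group $K$ one has $[\gamma_{c_1+1}(K),\gamma_{c_2+1}(K)]\subseteq\gamma_{c_1+c_2+2}(K)=\gamma_{c+1}(K)$, so every nilpotent group of class at most $c$ satisfies the defining law of $[\mathfrak{N}_{c_1},\mathfrak{N}_{c_2}]$; that is, $\mathfrak{N}_c\subseteq[\mathfrak{N}_{c_1},\mathfrak{N}_{c_2}]$. Since marginal subgroups are monotone with respect to variety inclusion (a larger variety gives a larger marginal subgroup) and the marginal subgroup of $\mathfrak{N}_c$ is precisely the $c$th centre, this yields $Z_c(H)=\mathfrak{N}_c^{\star}(H)\subseteq V^{\star}(H)$. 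This already explains the summand $\gamma_{n+1}(H)$ common to all three cases, because $[\gamma_{n+1}(H),\ _cH]\subseteq\gamma_{c+n+1}(H)=1$ gives $\gamma_{n+1}(H)\subseteq Z_c(H)$; equivalently, one checks directly that an element of $\gamma_{n+1}(H)$ is marginal, since every error term obtained by substituting it into the word $[[x_1,\ldots,x_{c_1+1}],[y_1,\ldots,y_{c_2+1}]]$ has weight at least $c+n+1$ and is therefore trivial.

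The heart of the proof, and the step I expect to be the main obstacle, is the reverse inclusion $V^{\star}(H)\subseteq Z_c(H)$: one must show that marginality with respect to the single outer commutator word of weight $c+1$ is already as restrictive as $c$-centrality for this $H$, even though $[\gamma_{c_1+1}(H),\gamma_{c_2+1}(H)]$ is in general strictly smaller than $\gamma_{c+1}(H)$. Mirroring the proof of Theorem 3.1, I would take $g\in V^{\star}(H)$ and expand it in the Hall basis of basic commutators on $X$; by the first part the components of $g$ of weight at least $n+1$ already lie in $Z_c(H)$, so only the components of weight at most $n$ need to be controlled. If $g\notin Z_c(H)$, let $w\leq n$ be the least weight of a component of $g$ that is not $c$-central. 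Substituting $g$ into the first slot of $[[x_1,\ldots,x_{c_1+1}],[y_1,\ldots,y_{c_2+1}]]$ and expanding, the lowest-weight contribution in which $g$ genuinely participates is a commutator of weight $w+c$ in which that component is commutated with $c$ further generators; since $w+c$ lies in the range $c+1,\ldots,c+n$, where $H$ is nontrivial, I would use Theorem 1.4 and the Witt formula (Theorem 1.5) to choose the remaining generators so that this contribution is a nontrivial basic commutator, contradicting marginality. The coprimality hypothesis enters exactly here, through Lemma 3.2: because every error term is an outer commutator in the generators, Lemma 3.2 determines which powers $x_i^{s}$ annihilate all outer commutators in which $x_i$ occurs, and, using $r_{i+1}\mid r_i$, these are precisely the powers that place $g$ in $Z_c(H)$. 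Matching the marginal and central conditions generator by generator then gives $V^{\star}(H)=Z_c(H)$, whence the three cases $m=0$, $m=1$ and $m\geq 2$ are exactly those of Theorem 3.1.
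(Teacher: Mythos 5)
Your overall strategy coincides with the paper's: reduce the theorem to the single identity $V^{\star}(H)=Z_c(H)$ and then quote Theorem 3.1 for the three cases. For the inclusion $Z_c(H)\subseteq V^{\star}(H)$ you argue by variety monotonicity ($\mathfrak{N}_c\subseteq[\mathfrak{N}_{c_1},\mathfrak{N}_{c_2}]$ since $[\gamma_{c_1+1},\gamma_{c_2+1}]\leq\gamma_{c+1}$, and marginal subgroups are monotone in the variety); that is a legitimate and rather cleaner shortcut than what the paper does, which is to verify directly that the explicit generators $\gamma_{n+1}(H)$ and $x_1^{r_2}$ (resp.\ $y_1^{r_1}$) are marginal by expanding $[x_1^{r_2},h_1,\ldots,h_{c_1},[h'_1,\ldots,h'_{c_2+1}]]$, observing that the error terms carry exponents $f_i(r_2)=\beta_1\binom{r_2}{1}+\cdots+\beta_{\omega_i}\binom{r_2}{\omega_i}$ divisible by $r_2$, and killing them with Lemma 3.2. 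Either route delivers that half of the theorem.

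The genuine gap is the reverse inclusion $V^{\star}(H)\subseteq Z_c(H)$, which you correctly single out as the heart of the proof but do not actually establish. Marginality for the word $[[x_1,\ldots,x_{c_1+1}],[y_1,\ldots,y_{c_2+1}]]$ only forces the vanishing of the \emph{restricted} commutators $[g,h_1,\ldots,h_{c_1},k]$ with $k\in\gamma_{c_2+1}(H)$ (and the analogues for the other slots); it does not formally force $[g,h_1,\ldots,h_c]=1$. Indeed a group of class $c+1$ in which $[\gamma_{c_1+1},\gamma_{c_2+1}]$ happens to vanish has $V^{\star}(H)=H$ while $Z_c(H)\neq H$, so the inclusion is a genuine property of this particular $H$, not a formal one. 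Consequently your step ``choose the remaining generators so that this contribution is a nontrivial basic commutator'' is precisely where all the work lives: one must show, via the Hall basis of $\gamma_{c+1}(F)/\gamma_{c+n+1}(F)$, Struik's lemma and the chain $r_{i+1}\mid r_i$, that every non-$c$-central component of $g$ can be detected by an error term of the restricted shape, and one must also exclude cancellation among the contributions of the different basic-commutator components of $g$. You name the right ingredients but supply none of these verifications. In fairness, the published proof is no better on this point: it opens with the bare assertion ``under the assumption of the theorem we have $V^{\star}(H)\subseteq Z_c(H)$'' and offers no argument, so this inclusion remains the unproved step in both your write-up and the paper's.
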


\begin{proof}
Under the assumption of the theorem we have $V^{\star}(H)\subseteq Z_c(H)$. It is enough to show that $V^{\star}(H)$ contains the right hand side of (1). Clearly $\gamma_{n+1}(H)\subseteq V^{\star}(H)$. We show that $x_1^{r_2}\in V^{\star}(H)$. Let $h_i, 1\leq i\leq c_1$ and $h'_j, 1\leq j\leq c_2+1$ be arbitrary elements of the set $\{ x_1,x_2,\cdots,x_t\}$ we have $[x_1^{r_2},h_1,\ldots,h_{c_2},[h'_1,\ldots,h'_{c_2+1}]]=[x_1,h_1,\ldots,h_{c_2},[h'_1,\ldots,h'_{c_2+1}]]^{r_2}E_1^{f_1(r_2)}\ldots E_k^{f_k(r_2}$ in which $E_i$'s are basic commutators on $\{ x_1,x_2,\cdots,x_t\}$ and $w(E_1),\cdots,w(E_k)\geq c+1$ and $f_i(r_2)=\beta_1 {r_2 \choose 1}+\cdots+\beta_{\omega_i} {r_2 \choose \omega_i}$ where $\omega_i=w(E_i)-c\leq n$. So we have $r_2\mid f_i(r_2)$ and now by Lemma 3.2 \linebreak $[x_1^{r_2},h_1,\cdots,h_{c_2},[h'_1,\cdots,h'_{c_2+1}]]=1$ hence $x_1^{r_1}\in V^{\star}(H)$. The cases for which $m=1$ or $m\geq 2$ have a similar proof.
\end{proof}

The following Theorem is the main result of this section.

\begin{thm}
Let $G\cong \mathbb{Z}\st{n}* \mathbb{Z}\st{n}* \cdots \st{n}* \mathbb{Z}\st{n}* \mathbb{Z}_{r_1}\st{n}* \mathbb{Z}_{r_2}\st{n}* \cdots \st{n}* \mathbb{Z}_{r_t}$ where $r_{i+1}\mid r_i$ ($1\leq i \leq t-1$) and all prime numbers smaller than $c+n$ are coprime to $r_1$ then $G$ is $[\mathfrak{N_{c_1}},\mathfrak{N_{c_2}}]$-capable if $m\geq 2$ or $m=0$ and $r_1=r_2$.
\end{thm}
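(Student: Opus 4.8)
The plan is to establish capability constructively, by exhibiting a single group that witnesses it. Recall that $G$ is $[\mathfrak{N}_{c_1},\mathfrak{N}_{c_2}]$-capable exactly when there is a group $H$ with $G\cong H/V^{\star}(H)$, where $V^{\star}(H)$ is the marginal subgroup of $H$ relative to the variety $[\mathfrak{N}_{c_1},\mathfrak{N}_{c_2}]$. Hence it is enough, in each of the two asserted cases, to produce one such $H$.

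I would take for $H$ precisely the group appearing in Theorem 3.3, namely the $(c+n)$th nilpotent product
$$H\cong \mathbb{Z}\st{c+n}* \cdots \st{c+n}* \mathbb{Z}\st{c+n}* \mathbb{Z}_{r_1}\st{c+n}* \cdots \st{c+n}* \mathbb{Z}_{r_t}$$
on the same generating set as $G$, with $c=c_1+c_2+1$. The structural identity that drives everything is that quotienting the $(c+n)$th nilpotent product by $\gamma_{n+1}$ recovers the $n$th nilpotent product: writing $P=\prod^{*}_iG_i$ for the ordinary free product, one has $H\cong P/\gamma_{c+n+1}(P)$ and $\gamma_{n+1}(H)=\gamma_{n+1}(P)/\gamma_{c+n+1}(P)$, whence $H/\gamma_{n+1}(H)\cong P/\gamma_{n+1}(P)\cong G$. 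Consequently the entire statement reduces to checking that, under the given hypotheses, $V^{\star}(H)$ equals $\gamma_{n+1}(H)$.

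That equality is exactly what Theorem 3.3 delivers. In the case $m\geq 2$ the theorem yields $V^{\star}(H)=\gamma_{n+1}(H)$ directly, so $H/V^{\star}(H)=H/\gamma_{n+1}(H)\cong G$ and $G$ is capable. In the case $m=0$ the theorem gives $V^{\star}(H)=\langle \gamma_{n+1}(H),x_1^{r_2}\rangle$, where $x_1$ generates the first cyclic factor $\mathbb{Z}_{r_1}$; here the additional hypothesis $r_1=r_2$ is used, since it forces $x_1^{r_2}=x_1^{r_1}=1\in\gamma_{n+1}(H)$, so that the adjoined element is trivial and again $V^{\star}(H)=\gamma_{n+1}(H)$, giving $H/V^{\star}(H)\cong G$. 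Thus in both cases $G$ is $[\mathfrak{N}_{c_1},\mathfrak{N}_{c_2}]$-capable.

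I expect no serious obstacle beyond the marginal-subgroup computation already carried out in Theorem 3.3; granting that, the capability argument is largely bookkeeping. The two points that deserve care are the isomorphism $H/\gamma_{n+1}(H)\cong G$, which depends only on the presentation of nilpotent products of cyclic groups as quotients $P/\gamma_{\bullet+1}(P)$, and the collapse $x_1^{r_2}=1$ that makes the $m=0$ argument succeed exactly when $r_1=r_2$. It is also worth remarking why $m=1$ is excluded: there Theorem 3.3 gives $V^{\star}(H)=\langle \gamma_{n+1}(H),y_1^{r_1}\rangle$ with $y_1$ of infinite order, so $V^{\star}(H)\supsetneq\gamma_{n+1}(H)$ and this particular choice of $H$ no longer returns $G$.
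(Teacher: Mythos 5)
Your proposal is correct and follows essentially the same route as the paper: take $H$ to be the $(c+n)$th nilpotent product on the same factors, invoke Theorem 3.3 to get $V^{\star}(H)=\gamma_{n+1}(H)$ (using $x_1^{r_2}=x_1^{r_1}=1$ when $m=0$ and $r_1=r_2$), and conclude $H/V^{\star}(H)\cong H/\gamma_{n+1}(H)\cong G$. You simply spell out the details (the identification $H/\gamma_{n+1}(H)\cong G$ via free-product quotients, and why $m=1$ fails) that the paper leaves implicit.
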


\begin{proof}
Let $H\cong \mathbb{Z}\st{c+n}* \mathbb{Z}\st{c+n}* \cdots \st{c+n}* \mathbb{Z}\st{c+n}* \mathbb{Z}_{r_1}\st{c+n}* \mathbb{Z}_{r_2}\st{c+n}* \cdots \st{c+n}* \mathbb{Z}_{r_t}$ then under the assumption of theorem $V^{\star}(H)=\gamma_{n+1}(H)$ so $H/V^{\star}(H)\cong H/\gamma_{n+1}(H)\cong G$.
\end{proof}

\section{Acknowledgement}
This research was supported by a grant from Ferdowsi University of Mashhad; (No. MP89203PAR)

%%%% Bibliography  %%%%%%%%%%

\end{document}